\newtheorem{teo}{Theorem}
\newtheorem{prop}[teo]{Proposition}
\newtheorem{lema}[teo]{Lemma}
\newtheorem{maintheorem}{Theorem}
\newtheorem{maincor}[maintheorem]{Corollary}
\numberwithin{equation}{section}
\newcommand{\Hei}{\mathcal{H}}
\newcommand{\R}{\mathbb{R}}
\newcommand{\x}{{\bf x}}
\renewcommand{\v}{{\bf v}}
\renewcommand{\a}{{\bf a}}
\title[Lyapunov exponents for families of linear cocycles]{Lyapunov exponents for families of rotated linear cocycles}
\author[P. Valenzuela-Henr\'iquez]{Pancho Valenzuela-Henr\'iquez}
\address{Pancho Valenzuela-Henr\'iquez, Instituto de Matem\'atica,
Pontificia Universidad Cat\'olica de Valpara\'{\i}so, Blanco Viel 596,
Cerro Bar\'on, Valpara\'{\i}so-Chile.}
\email{francisco.valenzuela@ucv.cl}
\thanks{P.V.H. was partially supported by Proyecto Fondecyt 3120193}
\author[C. H. V\'asquez]{Carlos H. V\'asquez}
\address{Carlos H. V\'asquez, Instituto de Matem\'atica,
Pontificia Universidad Cat\'olica de Valpara\'{\i}so, Blanco Viel 596,
Cerro Bar\'on, Valpara\'{\i}so-Chile.} \email{carlos.vasquez@ucv.cl}
\thanks{C.H.V.  was supported by the Center of Dynamical Systems and Related Fields c\'odigo ACT1103 PIA - Conicyt and Proyecto Fondecyt 1130547.}
\date{\today}
\subjclass{Primary: 37H15, 37D25, 37D30.}
\keywords{partial hyperbolicity, Lyapunov exponents, cocycles}
\begin{document}

\begin{abstract}
In this work, we are interested in the study of the upper Lyapunov exponent $\lambda\sp+(\theta)$ associated to the periodic family of cocycles defined by
$$A_\theta(x):=A(x)R_\theta,\qquad x\in X,$$
where $A\::\: X\to \mathbb{GL}\sp+(2,\mathbb{R})$ is a linear cocycle orientation--preser\-ving and $R_\theta$ is a rotation of angle $\theta\in\mathbb{R}$.
We show that if the cocycle $A$  has dominated splitting, then
there exists a non empty open set $\mathcal{U}$ of parameters $\theta$ such that  the cocycle $A_\theta$ has dominated splitting and the function $\mathcal{U}\ni\theta\mapsto\lambda\sp+(\theta)$ is real analytic and strictly concave. As a consequence,  we obtain that the set of parameters $\theta$ where  the cocycle $A_\theta$ has not dominated splitting is non empty.
\end{abstract}

\maketitle


\section{Introduction}\label{sec:intro}
{Consider a compact metric space $X$}, let $T\::\:X\to X$ be an homeomorphism and let $\mu$ be an ergodic invariant measure for $T$. Let $A:X\to \mathbb{GL}(2,\mathbb{R})$ be continuous and consider the \emph{linear cocycle generated by $A$ over $T$}, $A_T:X\times\mathbb R\sp2\to X\times\mathbb R\sp2$, defined by

$$A_T(x,v)=(Tx,A(x)v).$$

\medskip

For $n\in\mathbb{Z}$, the iterates of $A_T$ are given by $A_T\sp n(x,v)=(T^n(x),A^n(x))$ where  $A\sp 0(x)=\rm Id$, and for any integer  $n>0$,

$$A\sp n(x)=A(T\sp{n-1}x)\cdot \ldots\cdot A(Tx)\cdot A(x),$$

$$A\sp{-n}(x)=A(T\sp{-n}x)\sp{-1}\cdot A(T\sp{-(n-1)}x)\sp{-1}\cdot \ldots\cdot A(T\sp{-1}x)\sp{-1}.$$

The \emph{upper Lyapunov exponent} of the cocycle $A_T$ {at $x\in X$} is defined by

$$\lambda^+(A_T,x)=\lim_{n\to+\infty}\frac{1}{n}\log\|A^n(x)\|.$$

In the same way, we define the \emph{lower Lyapunov exponent} of the cocycle $A_T$ {at  $x\in X$}  by

$$\lambda^-(A_T,x)=\lim_{n\to+\infty}\frac{1}{n}\log\|A^{-n}(x)\|\sp{-1}.$$

The classical theory of linear cocycles was initiated by Furstesberg and Kesten \cite{F1963, FK}. They proved that the limits above exist for almost every point $x\in X$ and they do not {depend on} the point when $\log\sp+\Vert A\sp{\pm1}\Vert\in L^1(\mu)$.  {More information is provided in} Oseledet's Theorem \cite{O,M} which establishes the existence of {a measurable} invariant splitting $\mathbb{R}^2=E^-(x)\oplus E^+(x)$ such that for $\mu$-almost all $x\in X$ and every $v\in E^+(x)\setminus\{0\}$
$$
\lim_{n\to\pm\infty}\frac{1}{n}\log\|A\sp n(x)v\|=\lambda^+(A_T,x).
$$
\noindent Similarly, for every $u\in E^-(x)$, $u\ne0$, we have
$$
\lim_{n\to\pm\infty}\frac{1}{n}\log\|A\sp n(x)u\|=\lambda^-(A_T,x).
$$

\medskip




We assume that the linear cocycle $A_T$ has \emph{dominated splitting}, that is, there exists a continuous decomposition $\mathbb{R}^2=E(x)\oplus F(x)$ such that for every $x\in X$:

\begin{enumerate}
\item $\dim E(x)=\dim F(x)=1,$
\item $A(x)E(x)=E(Tx)$, $A(x)F(x)=F(Tx)$, and
\item there exists $l\ge1$ such that
$$\|A\sp l(x)|{E(x)}\|\cdot\|A\sp{-l}(T\sp lx)|{F(T\sp lx)}\|<\frac12.$$
\end{enumerate}

We recall that in our case dominated splitting is a continuous extension of Osedelec splitting.

%
%

We denote by $\mathbb{GL}^+(2,\mathbb{R})$ the set of non singular matrices $A$ satisfying $\det A>0$. We are interested in the periodic family of cocycles defined by
$$
A_\theta(x):=A(x)R_\theta,\qquad x\in X,
$$
where $A\::\: X\to \mathbb{GL}\sp+(2,\mathbb{R})$ is a linear cocycle orientation--preserving and $R_\theta$ is a rotation of angle $\theta\in\mathbb{R}$.
For every $\theta\in\mathbb{R}$, the system underlying on the base $X$ is the original one $(T,X,\mu)$. For this reason, we will simplify the notation omitting $T$ if it is not necessary to explicit it.

When the matrices $A$ belong to $\mathbb{SL}(2,\mathbb{R})$ the previous family acquires interesting properties. For instance, Herman proved that the average of the upper Lyapunov exponent of $A_\theta$ is bounded above {by a certain value} involved with the norm of the original linear cocycle \cite[Section 6.2]{H}. Later, Avila and Bochi showed that the previous relation is in fact an equality \cite{AB}. The relation above was used by  Knill to  prove that {there exists} a dense  set of bounded $\mathbb{SL}(2,\mathbb{R})$ cocycles that have non-zero Lyapunov exponents (see \cite[Proposition 2.4, Theorem 3.1]{Kn92}). We refer the reader to \cite{BLD2011} for an alternative proof of the Herman-Avila-Bochi equality and \cite[Section 12.5.2]{BDV05}, \cite[Lemma 2]{Yoccoz2004} and \cite[Lemma 6.6]{BC2004} for another remarkable properties related with the family $A_\theta$.

\medskip

We are interested how the Lyapunov exponent $\lambda\sp+(A_\theta)$ varies with respect to $\theta$, when $A=A_0$ has dominated splitting. Earlier Ruelle \cite{Ruelle79} proved that the upper Lyapunov exponents varies analytically with respect to the cocycles with dominated splitting. On the other hand, concavity of the Lyapunov exponents was studied by Shub and Wilkinson \cite{SW} in the setting of perturbations of Anosov times identity on the three-torus. Roughly speaking, they studied how Lyapunov exponents change when the derivative $Df$ restricted to the center unstable subbundle $E^{cu}$ is composed by a matrix of type
$$
\left(\begin{array}{cc}
1&t\\0&1
\end{array}\right),\quad t\ne 0.
$$

\medskip

Our main Theorem is the following.

\begin{maintheorem}\label{teo:main}
Assume that the cocycle $A:X\to \mathbb{GL}^+(2,\mathbb{R})$ is continuous and has dominated splitting. Then, there exists an open set $\mathcal{U}\subseteq \mathbb{R}$ such that for any $\theta\in \mathcal{U}$ the cocycle $A_\theta$ has dominated splitting and the function $\mathcal{U}\ni\theta\mapsto\lambda\sp+(A_\theta)\in\R$ is real analytic and strictly concave.
\end{maintheorem}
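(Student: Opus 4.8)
The plan is to parametrize the invariant subbundles of $A_\theta$ projectively and reduce the analyticity of $\theta\mapsto\lambda^+(A_\theta)$ to the analyticity of the action of $A_\theta$ on the projective line, then extract concavity from the explicit form of the rotation. First I would note that dominated splitting is an open condition: since $A=A_0$ has dominated splitting, there is an open interval $\mathcal{U}\ni 0$ such that $A_\theta$ has dominated splitting $\mathbb{R}^2=E_\theta(x)\oplus F_\theta(x)$ for all $\theta\in\mathcal{U}$ (this uses that $R_\theta$ depends continuously on $\theta$ and that the cone criterion defining domination is robust). On $\mathcal{U}$ the dominating direction $F_\theta(x)$ is the unique attracting fixed section of the projective cocycle, so writing $F_\theta(x)$ in an angular coordinate $\varphi(\theta,x)\in\mathbb{RP}^1$, the graph transform (contraction in the relevant cone) together with the uniform-contraction version of the implicit function theorem gives that $\varphi$ is real analytic in $\theta$ (jointly continuous in $x$). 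Then
$$
\lambda^+(A_\theta)=\int_X \log\|A(x)R_\theta\, u_\theta(x)\|\,d\mu(x),
$$
where $u_\theta(x)$ is the unit vector spanning $F_\theta(x)$; since the integrand is real analytic in $\theta$ uniformly in $x$ (the only $\theta$-dependence sits in $R_\theta$ and in the analytic section $u_\theta$), differentiation under the integral sign yields real analyticity of $\lambda^+(A_\theta)$ on $\mathcal{U}$.

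For the strict concavity I would compute the second derivative in $\theta$. Fix $x$ and write $A(x)$ in its singular-value decomposition, or more simply work with the cocycle over one period and differentiate the scalar $\theta\mapsto \log\|A_\theta^n(x) v\|$ along the invariant direction. The key elementary fact is that for a fixed $M\in\mathbb{GL}^+(2,\mathbb{R})$ and a fixed unit vector $v$, the map $\theta\mapsto \log\|M R_\theta v\|$ has second derivative that is strictly negative except at isolated points, because $R_\theta v$ traces a circle and $\log\|M\cdot\|$ restricted to the circle is, after the standard reduction, $\tfrac12\log(a+b\cos(2\theta+c))$ for constants with $|b|<a$ (when $M$ is not conformal), whose second derivative is strictly negative on a set of full measure. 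Integrating this pointwise concavity against $\mu$, and using that the contribution of the variation of the invariant section $u_\theta(x)$ only helps (it is the maximizing direction, so a Danskin/envelope argument shows the second derivative of $\lambda^+$ is bounded above by the second derivative computed with the section frozen), one gets $\frac{d^2}{d\theta^2}\lambda^+(A_\theta)<0$ on $\mathcal{U}$, hence strict concavity. One must rule out the degenerate case where $A(x)$ is a conformal matrix ($a$ scalar times a rotation) for $\mu$-a.e.\ $x$; but in that case $A$ would have $\lambda^+=\lambda^-$ and no dominated splitting, contradicting the hypothesis, so the strict inequality survives.

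Finally, the consequence that the set of parameters where $A_\theta$ has no dominated splitting is nonempty: if $A_\theta$ had dominated splitting for every $\theta\in\mathbb{R}$, the argument above would make $\theta\mapsto\lambda^+(A_\theta)$ real analytic and strictly concave on all of $\mathbb{R}$; but $\lambda^+(A_{\theta+2\pi})$ and $\lambda^+(A_\theta)$ need not coincide in general — however $\lambda^+(A_\theta)$ is bounded (by $\int\log\|A(x)\|\,d\mu$, since $\|A_\theta^n(x)\|\le\prod\|A(T^j x)\|$), and a strictly concave function on $\mathbb{R}$ that is bounded above is impossible unless it is eventually affine, which strict concavity forbids. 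Hence domination must fail for some $\theta$, and in fact the set of such $\theta$ is open and its complement cannot be all of $\mathbb{R}$.

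I expect the main obstacle to be making the envelope/Danskin step rigorous — i.e.\ controlling the variation of the invariant section $u_\theta(x)$ uniformly in $x$ and showing its contribution to the second derivative does not destroy strict negativity — together with the bookkeeping needed to pass from the pointwise (one-matrix) concavity statement to the cocycle $A_\theta^n$ over a long orbit segment and then to the limit defining $\lambda^+$.
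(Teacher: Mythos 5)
Your analyticity step is a plausible alternative to the paper's argument: you propose to get real analyticity of the invariant section $u_\theta(x)$ from the graph transform plus a uniform-contraction implicit function theorem, whereas the paper first quasi-conjugates $A$ to a triangular cocycle $H$, complexifies $\theta$ to a parameter $z$ in a disk, and deduces holomorphicity of the fixed section $\xi_z(x)$ from Montel's theorem. Both routes can work; the paper's complexification is tailored to getting analyticity directly, while your graph-transform approach would need a careful holomorphic/analytic version of the contraction-mapping argument, but the overall structure of that part is sound.

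The concavity step, however, has a genuine gap, and it is the heart of the theorem. Your ``key elementary fact'' is false: for a fixed $M\in\mathbb{GL}^+(2,\mathbb R)$ and unit $v$, the map $\theta\mapsto\log\|MR_\theta v\|$ is $2\pi$-periodic and therefore cannot be globally concave, so its second derivative cannot be strictly negative off a set of isolated points. Concretely, $\log\|MR_\theta v\|=\tfrac12\log\bigl(a+b\cos(2\theta+c)\bigr)$ with $a>|b|>0$ has
$$
\frac{d^2}{d\theta^2}\,\tfrac12\log\bigl(a+b\cos(2\theta+c)\bigr)=\frac{-2b\bigl(a\cos(2\theta+c)+b\bigr)}{\bigl(a+b\cos(2\theta+c)\bigr)^2},
$$
which changes sign: it is positive on the whole region where $\cos(2\theta+c)<-b/a$, a set of positive measure. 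So ``integrating the pointwise concavity'' is not available. Moreover, your Danskin/envelope argument points the wrong way. For $g(\theta)=\sup_v h(\theta,v)$ with interior maximizer $v^*(\theta)$, one has $g''(\theta)=h_{\theta\theta}-h_{\theta v}^2/h_{vv}\ge h_{\theta\theta}$ because $h_{vv}\le 0$ at a maximum; so allowing the (maximizing) invariant section to vary can only make the second derivative larger, i.e.\ the frozen-section computation is a lower bound, not an upper bound, for $\frac{d^2}{d\theta^2}\lambda^+$. The paper avoids both problems: it writes $\lambda^+(\theta)=\lambda^+(0)+\lambda^-(0)-\int\log(a_\theta-c_\theta\,u_\theta\circ T)\,d\mu$ using the triangular form of $H$, differentiates twice at $\theta=0$, solves the resulting recurrences for $\dot u_0$ and $\ddot u_0$ explicitly as geometric-type series, and then shows the resulting expression is negative by a homogeneity argument in the off-diagonal entry $\sigma$ (Lemma~\ref{le:maximo} handles $\sigma\equiv 0$, Proposition~\ref{prp:02} the general case). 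There is no shortcut via pointwise concavity of $\log\|MR_\theta v\|$.

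As a side remark on your final paragraph (which concerns Corollary~\ref{cor:mainnodomin} rather than Theorem~\ref{teo:main} itself): $\lambda^+(A_{\theta+2\pi})$ \emph{does} equal $\lambda^+(A_\theta)$, since $R_{\theta+2\pi}=R_\theta$; periodicity is exactly the point. Your fallback to boundedness does not give a contradiction, because a strictly concave function on $\mathbb{R}$ can perfectly well be bounded above (e.g.\ $-\theta^2$). The correct obstruction, as in the paper, is that a strictly concave function cannot be periodic.
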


In Theorem~\ref{teo:main} the set $\mathcal{U}$ is formed by the parameters $\theta \in \mathbb{R}$ such that $A_\theta$ has dominated splitting. However, it is not possible to have $\mathcal{U}=\mathbb{R}$ due the concavity. In fact, if we denote
$$
\mathcal D=\{\theta\in[0,2\pi]\::\: \mbox{ the cocycle }A_\theta \mbox{ has not dominated splitting}\}
$$
and we assume that $\mathcal D=\emptyset$, it follows from Theorem~\ref{teo:main} that the function $\theta\mapsto\lambda\sp+(AR_\theta)$ is real analytic, concave and periodic in the interval $[0,2\pi]$. In particular, the function $\theta\mapsto\lambda\sp+(A_\theta)$ must have a minimum in a point $\theta_0\in[0,2\pi]$. If the cocycle $B=AR_{\theta_0}$ has dominated splitting, by a change in the parameter, we can apply Theorem~\ref{teo:main} to the cocycle $B$ having a contradiction with the concavity. Summarizing,

\begin{maincor}\label{cor:mainnodomin}
The set $\mathbb{R}\setminus\mathcal{U}$ is not empty and for any $\theta\in\mathbb{R}\setminus\mathcal{U}$, the cocycle $A_\theta$ does not have dominated splitting.
\end{maincor}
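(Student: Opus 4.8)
\emph{Proof proposal.} The plan is a short argument by contradiction, exploiting that a function defined and periodic on all of $\R$ cannot be strictly concave. I will use the fact, established in the course of proving Theorem~\ref{teo:main}, that the open set $\mathcal U$ there is exactly $\{\theta\in\R : A_\theta \text{ has dominated splitting}\}$; granting this, the second assertion of the corollary (every $\theta\in\R\setminus\mathcal U$ gives a cocycle without dominated splitting) is a tautology, so the real task is to show $\mathcal U\ne\R$.

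First I would suppose, for contradiction, that $\mathcal U=\R$, i.e.\ $A_\theta$ has dominated splitting for every $\theta$. Since $R_{\theta+2\pi}=R_\theta$ we have $A_{\theta+2\pi}=A_\theta$, so the function $f(\theta):=\lambda^+(A_\theta)$ is $2\pi$-periodic; and by Theorem~\ref{teo:main} it is real analytic and strictly concave on all of $\R$. Being continuous and periodic, $f$ attains a global minimum at some $\theta_0\in[0,2\pi]$. Now I would apply Theorem~\ref{teo:main} to the cocycle $B:=AR_{\theta_0}$, which is continuous and, since $\theta_0\in\mathcal U$, has dominated splitting: this produces (again by the description of that set) an open set containing $0$ on which $\phi\mapsto\lambda^+(BR_\phi)$ is strictly concave. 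But $BR_\phi=AR_{\theta_0+\phi}=A_{\theta_0+\phi}$, so this function is $\phi\mapsto f(\theta_0+\phi)$, which has a local (indeed global) minimum at $\phi=0$ --- impossible for a strictly concave function of one real variable on an open interval. This contradiction forces $\mathcal U\ne\R$, and the corollary follows. Alternatively, one can avoid the change of parameter entirely: a real analytic strictly concave $f$ on $\R$ has $f'$ strictly decreasing, hence injective, while $2\pi$-periodicity of $f$ makes $f'$ periodic, so $f'$ would be constant and $f$ affine, contradicting strict concavity.

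I do not expect a genuine obstacle here, since the corollary is a soft consequence of Theorem~\ref{teo:main}. The only point requiring care is the description of $\mathcal U$: one must know that the set produced by Theorem~\ref{teo:main} coincides with the full dominated-splitting locus, so that $\R\setminus\mathcal U$ is precisely the bad set and, in the argument above, the parameter change $\theta\mapsto\theta-\theta_0$ legitimately lands back inside the hypotheses of Theorem~\ref{teo:main} near $\theta_0$. This is exactly the content of the sentence ``the set $\mathcal U$ is formed by the parameters $\theta$ such that $A_\theta$ has dominated splitting'', so no additional work is needed; the whole mechanism is simply that a strictly concave function cannot be $2\pi$-periodic.
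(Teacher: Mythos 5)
Your argument is correct and matches the paper's own proof almost verbatim: assume $\mathcal U=\R$, note that $\theta\mapsto\lambda^+(A_\theta)$ is then real analytic, strictly concave and $2\pi$-periodic, pick the parameter $\theta_0$ where the minimum is attained, apply Theorem~\ref{teo:main} to $B=AR_{\theta_0}$ after a shift of parameter, and contradict strict concavity at an interior minimum. The one-line alternative via $f'$ being simultaneously strictly decreasing and periodic is a nice extra but not a substantively different route.
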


Also as consequence of concavity, the Lyapunov exponents do not remain constant with respect to the parameter in the domain of domination. More precisely:

\begin{maincor}\label{cor:mainhyper}
There exists $\varepsilon>0$, an open interval $I\subseteq\mathbb{R}$, with $|I|=\varepsilon$ and $0\in\partial I$ such that  for any $\theta\in I$, we
have
$$\lambda\sp+(A_\theta)< \lambda\sp+(A)$$
and
$$\lambda\sp-(A_\theta)> \lambda\sp-(A).$$
\end{maincor}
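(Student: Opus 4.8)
The plan is to deduce Corollary~\ref{cor:mainhyper} directly from Theorem~\ref{teo:main} together with the classical variational characterization of the Lyapunov exponents for cocycles with dominated splitting. First I would observe that since $A=A_0$ has dominated splitting, by Theorem~\ref{teo:main} there is an open set $\mathcal{U}\ni 0$, hence a one--sided open interval $I$ with $0\in\partial I$ and $|I|=\varepsilon$ for some $\varepsilon>0$, on which $\theta\mapsto\lambda^+(A_\theta)$ is real analytic and strictly concave. The key point is to control the derivative at $\theta=0$: I claim that $\frac{d}{d\theta}\lambda^+(A_\theta)\big|_{\theta=0}=0$. Granting this claim, strict concavity forces $\lambda^+(A_\theta)<\lambda^+(A_0)=\lambda^+(A)$ for every $\theta\in I\setminus\{0\}$ (the strictly concave function has a strict local max at the critical point $0$), which is the first inequality. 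For the second inequality I would use the standard duality for $\mathbb{GL}^+(2,\mathbb{R})$ cocycles: applying the first part to the inverse-transpose cocycle (or equivalently using $\lambda^-(A_\theta)=-\lambda^+((A_\theta)^{-1})$ together with $(AR_\theta)^{-1}=R_{-\theta}A^{-1}$, which after a conjugation and a sign change in the parameter is again a rotated family with dominated splitting at $\theta=0$) yields $\lambda^-(A_\theta)>\lambda^-(A)$ on a possibly smaller one--sided interval; shrinking $\varepsilon$ if necessary gives a single interval $I$ that works for both.

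The step that requires genuine work is the claim that the $\theta$-derivative of $\lambda^+(A_\theta)$ vanishes at $\theta=0$. The natural way to see this is via the analytic formula for $\lambda^+$ in the dominated regime: if $\mathbb{R}^2=E_\theta(x)\oplus F_\theta(x)$ denotes the dominated splitting of $A_\theta$ (with $F_\theta$ the expanding direction), then $\lambda^+(A_\theta)=\int_X \log\|A(x)R_\theta|_{F_\theta(x)}\|\,d\mu(x)$, and the map $\theta\mapsto(E_\theta,F_\theta,\text{projections})$ is real analytic by the same persistence/analyticity argument used to prove Theorem~\ref{teo:main}. Differentiating under the integral at $\theta=0$ produces two contributions: one from differentiating $R_\theta$, which brings down the infinitesimal rotation generator $J=\begin{pmatrix}0&-1\\1&0\end{pmatrix}$, and one from the variation of the invariant subbundle $F_\theta$. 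The second contribution is annihilated after integration by the invariance of $\mu$ and the fact that $F_0$ is $A$-invariant — this is the familiar ``derivative of the top exponent only feels the explicit perturbation along the invariant direction'' phenomenon. The first contribution is $\int_X \langle \text{(unit vector in }F_0(x)), J\,(\text{unit vector in }F_0(x))\rangle\,(\dots)\,d\mu$, but $\langle w, Jw\rangle=0$ for every $w\in\mathbb{R}^2$ since $J$ is antisymmetric; hence the whole derivative is zero.

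I expect the main obstacle to be making this differentiation rigorous, i.e.\ justifying the interchange of $\frac{d}{d\theta}$ and $\int_X$ and correctly accounting for the variation of $F_\theta$. Concretely one should fix a real-analytic family of unit vectors $w_\theta(x)\in F_\theta(x)$ (locally in $\theta$, using the analyticity of the splitting established in the proof of Theorem~\ref{teo:main}), write $\lambda^+(A_\theta)=\int_X\log\|A(x)R_\theta w_\theta(x)\|\,d\mu$ using $\|A^n_\theta(x)w_\theta(x)\|$ growth plus the Birkhoff/ergodic reduction, and then differentiate; the cross terms coming from $\partial_\theta w_\theta$ reorganize, after using $T$-invariance of $\mu$ and the cocycle relation $A(x)R_\theta w_\theta(x)\in F_\theta(Tx)$, into a telescoping expression that integrates to $0$. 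Once the vanishing of the first derivative is in hand, everything else is immediate from strict concavity and the duality argument, so the proof is short modulo that computation. An alternative, perhaps cleaner, route is to note that Theorem~\ref{teo:main} already gives strict concavity on $\mathcal{U}$; if $0$ were \emph{not} a critical point one would still get the desired strict inequality on the side where the concave function decreases, so in fact one only needs that $\lambda^+$ is \emph{not} strictly increasing through $\theta=0$, and the antisymmetry of $J$ gives even a little more, namely the clean statement $0\in\partial I$ with the inequality on all of $I$.
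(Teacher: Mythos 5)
Your central claim — that $\dfrac{d}{d\theta}\lambda^+(A_\theta)\big|_{\theta=0}=0$ — is false in general, and the heuristic you give for it does not survive scrutiny. The antisymmetry argument would compute $\langle w_0(x),Jw_0(x)\rangle=0$, but after differentiating $\log\|A(x)R_\theta w_\theta(x)\|$ the relevant inner product is $\langle A(x)w_0(x),\,A(x)Jw_0(x)\rangle$, and since $A(x)$ is not orthogonal this does not reduce to $\langle w_0,Jw_0\rangle$. Moreover the term coming from $\partial_\theta w_\theta$ does not ``telescope to zero'' either. In the paper's notation (Lemma~\ref{le:cl001}) the derivative is
\[
\frac{d\lambda^+(0)}{d\theta}=\int\frac{\sigma(x)}{\lambda(x)}\,\dot u_0(Tx)\,d\mu(x),
\]
which is nonzero whenever $\sigma\not\equiv0$; e.g.\ for the constant cocycle $H=\left(\begin{smallmatrix}\lambda&0\\\sigma&\eta\end{smallmatrix}\right)$ over a fixed point one computes directly $\frac{d\lambda^+(0)}{d\theta}=-\sigma/(\eta-\lambda)\ne0$. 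So $\theta=0$ need not be a critical point, and your main route breaks.

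What saves you is the last sentence of your proposal, which is in fact the whole argument the paper has in mind: Corollary~\ref{cor:mainhyper} only asks for a one--sided interval $I$ with $0\in\partial I$, and strict concavity of the real analytic function $\theta\mapsto\lambda^+(A_\theta)$ near $0$ (Theorem~\ref{teo:main}, or Proposition~\ref{prp:02} which gives $\frac{d^2\lambda^+(0)}{d\theta^2}<0$) already rules out $\theta=0$ being a local minimum, hence on at least one side of $0$ one has $\lambda^+(A_\theta)<\lambda^+(A_0)$. You should present that as the proof, not as a fallback. For the second inequality the inverse--transpose/duality route works but is more delicate than you acknowledge (you must check $\lambda^+(R_\phi B)=\lambda^+(BR_\phi)$ and that the resulting one--sided interval coincides with the first); the paper's implicit and much shorter route is to observe that $\det R_\theta=1$ forces $\lambda^+(A_\theta)+\lambda^-(A_\theta)=\int\log|\det A|\,d\mu$ to be independent of $\theta$, so $\lambda^-(A_\theta)>\lambda^-(A)$ is literally equivalent to $\lambda^+(A_\theta)<\lambda^+(A)$ on the same interval.
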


The remaining of the paper is organized as follows. We will divide the proof of Theorem~\ref{teo:main} in two parts.

In Section~\ref{sec:QuasConj}, we introduce the notion of quasi--conjugation of cocycles. We prove that for quasi--conjugated cocycles,  their upper Lyapunov exponents are equals. We establish the existence of a linear cocycle $H$ quasi--conjugated to $A$.  The new cocycle $H$ is formed by upper triangular matrices  and it exhibits dominated splitting when $A$ has dominated splitting.

The core of the proof of Theorem~\ref{teo:main} is in Section~\ref{sec:pteo02}. There, we define a new metric that allows an easy calculation of the Lyapunov exponent. Then, inspired in the work of Shub and Wilkinson \cite{SW}, we give an explicit expression for the Lyapunov exponent $\lambda\sp+(A_\theta)$ and we show that the expressions involved are real analytic functions and thus, we can study the concavity of $\lambda\sp+(A_\theta)$.

\medskip

In Section~\ref{sec:appl}, we detailed an application of Theorem~\ref{teo:main}. Let $X=N$ be the compact nilmanifold obtained from the quotient of the Heisenberg group $\mathcal H$ with the lattice $\Gamma=\{(\x, y) : \x\in \mathbb{Z}^{2}, y\in \frac1{2}\mathbb{Z}\}$. Let $\Phi\::\:\mathcal H\to\Hei$ be an automorphism and $f:N\to N$ the diffeomorphism induced by $\Phi$. Then $f$ is partially hyperbolic with splitting
$$TN=E^s\oplus E^c\oplus E\sp u,$$
and the Lebesgue measure in $N$ is $f$-invariant and ergodic.

We consider the natural linear cocycle induced from $f$ and its  derivative $F:TN\to TN$ defined by
$$F((\x,y),v)=(f(\x,y),Df(\x,y)v).$$
In such a case, the cocycle defined by the restriction to the center-unstable direction $(F|E^{cu}):E\sp {cu}\to E\sp{cu}$ is orientation--preserving and it has dominated splitting. Moreover, the Lyapunov exponents are

$$-\lambda^u(F)=\lambda^s(F)<\lambda^c(F)=0<\lambda^u(F).$$
In particular, for the cocycle restricted to the center unstable direction we have
$$\lambda^+_c(F|E\sp{cu})=\lambda(Df|E^u)=\lambda\sp u(F)>0,$$
and
$$\lambda^-_c(F|E\sp{cu})=\lambda(Df|E^c)=\lambda\sp c(F)=0.$$

\noindent Then, we consider the one--parameter family of continuous cocycles $F_\theta\::\: TN\to TN$ defined by
$$
F_\theta((\x,y),v)=\begin{cases} Df(\x,y)R_\theta v&, \quad v\in E^{cu};\\
Df(\x,y)v &, \quad v\in
E^s.
\end{cases}
$$

\noindent Then we can apply Corollary~\ref{cor:mainhyper} to $F_\theta|E\sp{cu}$ and conclude the following.
\begin{maincor}\label{cor:main}
There is an open set $\mathcal{I}\subseteq [0,2\pi]$ such that for every $\theta\in \mathcal{I}$, the cocycle $F_\theta$ is partially hyperbolic with splitting $TN=E_\theta\sp s\oplus E_\theta\sp c \oplus E_\theta\sp u$ and $F_\theta$ is non uniformly hyperbolic.
\end{maincor}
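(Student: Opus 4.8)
The statement to prove is Corollary~\ref{cor:main}, which asserts that for $\theta$ in a suitable open set, the rotated cocycle $F_\theta$ is partially hyperbolic with a three-way splitting and is non uniformly hyperbolic. The strategy is to transfer the conclusions of Corollary~\ref{cor:mainhyper} — applied to the two-dimensional cocycle $F_\theta|E^{cu}$ — to the full cocycle $F_\theta$ on $TN$. First I would note that $F_\theta$ leaves the bundle $E^s$ invariant with the unperturbed action $Df|E^s$, so the contraction on $E^s$ is exactly that of $f$: there is a uniform rate $\lambda^s(F)=-\lambda^u(F)<0$, in particular $E^s$ is uniformly contracted by $F_\theta$ for \emph{every} $\theta$. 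Thus the only part of the splitting that needs work is the center-unstable bundle, and there the hypotheses of Theorem~\ref{teo:main} are met: as recorded in the excerpt, $F|E^{cu}$ is orientation-preserving in $\mathbb{GL}^+(2,\mathbb{R})$ and has dominated splitting $E^{cu}=E^c\oplus E^u$ with $\lambda^+_c(F|E^{cu})=\lambda^u(F)>0$ and $\lambda^-_c(F|E^{cu})=\lambda^c(F)=0$.

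\smallskip

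Next I would invoke Theorem~\ref{teo:main}: there is an open set $\mathcal{U}$ of parameters for which $(F|E^{cu})R_\theta=F_\theta|E^{cu}$ has dominated splitting $E^{cu}_\theta=E^c_\theta\oplus E^u_\theta$, varying continuously (indeed analytically) in $\theta$. Setting $E^s_\theta:=E^s$, we obtain a continuous splitting $TN=E^s_\theta\oplus E^c_\theta\oplus E^u_\theta$ that is $DF_\theta$-invariant. To check partial hyperbolicity one must verify the domination inequalities between consecutive bundles: domination between $E^c_\theta$ and $E^u_\theta$ is exactly the dominated splitting given by Theorem~\ref{teo:main}; domination between $E^s_\theta=E^s$ and $E^c_\theta$ follows because the vectors in $E^c_\theta$ have sub-exponential (in fact, bounded, by analyticity of the cocycle and compactness) growth rates while $E^s$ is contracted at a fixed exponential rate $\lambda^s(F)<0$ — more carefully, for $\theta$ close enough to $0$ the quantity $\|A_\theta^l(x)|E^c_\theta\|$ stays uniformly bounded, so by shrinking $\mathcal{U}$ around $0$ and choosing $l$ large the product $\|DF_\theta^{-l}|E^c_\theta\|\cdot\|DF_\theta^l|E^s\|$ can be made $<1/2$. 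Here one uses continuity of $E^c_\theta$ and of the cocycle in $\theta$, together with the fact that the $E^s$-rate is independent of $\theta$. This yields the partially hyperbolic splitting $TN=E^s_\theta\oplus E^c_\theta\oplus E^u_\theta$ on a (possibly smaller) open set $\mathcal{I}\ni$ points near $0$.

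\smallskip

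Finally, for non uniform hyperbolicity I would apply Corollary~\ref{cor:mainhyper} to the cocycle $F|E^{cu}$: it gives an open interval $I$ with $0\in\partial I$ and $|I|=\varepsilon>0$ such that $\lambda^+_c(F_\theta|E^{cu})<\lambda^+_c(F|E^{cu})=\lambda^u(F)$ and $\lambda^-_c(F_\theta|E^{cu})>\lambda^-_c(F|E^{cu})=0$. The second inequality is the crucial point: the center Lyapunov exponent $\lambda^c(F_\theta):=\lambda^-_c(F_\theta|E^{cu})$ becomes \emph{strictly positive} for $\theta\in I$, while $E^s_\theta$ still has exponent $\lambda^s(F)<0$. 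Hence for $\theta\in\mathcal{I}:=I\cap\mathcal{U}$ (an open subset of $[0,2\pi]$, nonempty since $0\in\partial I\cap\overline{\mathcal{U}}$ and both sets are open near $0$ so their intersection accumulates at $0$) all Lyapunov exponents of $F_\theta$ are nonzero $\mu$-almost everywhere: the $E^s_\theta$ and $E^u_\theta$ exponents are nonzero by (uniform) hyperbolicity of those bundles, and the former center exponent is now nonzero by the strict inequality. Thus $F_\theta$ is non uniformly hyperbolic.

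\smallskip

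\textbf{Main obstacle.} The only genuinely delicate point is verifying the domination between $E^s$ and the new center bundle $E^c_\theta$ — i.e., that inserting the rotation does not destroy the gap between the stable and center directions. This is where one must use that the perturbation $R_\theta$ is small (forcing $\mathcal{I}$ to consist of parameters near $0$) and that the $E^s$-behavior is untouched by the construction, combined with the continuous dependence of $E^c_\theta$ on $\theta$ coming from Theorem~\ref{teo:main}; everything else is a direct translation of the two-dimensional results.
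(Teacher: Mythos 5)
Your proposal is correct and follows essentially the same route as the paper: apply Theorem~\ref{teo:main} to $F_\theta|E^{cu}$ to get the dominated splitting $E^c_\theta\oplus E^u_\theta$ and hence the partially hyperbolic splitting $TN=E^s\oplus E^c_\theta\oplus E^u_\theta$, then invoke Corollary~\ref{cor:mainhyper} to force the central exponent $\lambda^-(F_\theta|E^{cu})$ to become strictly positive on a one-sided interval accumulating at $0$, which gives non-uniform hyperbolicity. In fact you are more careful than the paper at the one place it is terse — the domination between $E^s$ (which is $\theta$-independent) and the new center bundle $E^c_\theta$ — and your continuity/compactness argument there is sound, using that $F|E^c$ acts with uniformly bounded norm so the gap to the fixed contraction rate on $E^s$ persists for $\theta$ near $0$.
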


\section{Quasi--conjugation}\label{sec:QuasConj}

Let $X$ be a compact metric space, let $T\::\:X\to X$ be an homeomorphism and let $\mu$ be an ergodic invariant measure for $T$. Consider the continuous maps $A:X\to \mathbb{GL}(2,\mathbb{R})$ and $H:X\to \mathbb{GL}(2,\mathbb{R})$ and consider the linear cocycle generated by $A$ over $T$, $A_T:X\times\mathbb R\sp2\to X\times\mathbb R\sp2$, and the linear cocycle generated by $H$ over $T$, $H_T:X\times\mathbb R\sp2\to X\times\mathbb R\sp2$.

The linear cocycles $A_T$, $H_T$ are {\em quasi--conjugated} if there exist two families of matrices $\mathfrak B=\{B(x)\in\mathbb{SO}(2,\mathbb R)\, :\, x\in X\}$ and $\mathfrak D=\{D(x)\in\mathbb{SO}(2,\mathbb R)\, :\, x\in X\}$ satisfying:
\begin{enumerate}
\item $H(x)=(D(x))\sp{-1}\cdot A(x)\cdot B(x)$, for every $x\in X$; and
\item $B(Tx)=s(x) D(x)$ where $s(x)\in\{1, -1\}$, for every $x\in X$.
\end{enumerate}

We remark that it is not required some continuity on the variable $x$ for the families of matrices. Recall that the element of $\mathbb{SO}(2,\mathbb R)$ are isometries of $\mathbb R\sp2$ and they form a commutative group.

We have the following.
\begin{lema}\label{le:qc001}
Let $A_T$ and $H_T$ be quasi--conjugated linear cocycles. Then
\begin{enumerate}
\item The cocycles $A_\theta$ and $H_\theta$ are quasi--conjugated,
\item $\lambda\sp+(A)=\lambda\sp+(H)$.
\end{enumerate}
\end{lema}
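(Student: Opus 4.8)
The plan is to prove both statements more or less simultaneously, by iterating the defining relations of quasi-conjugation to obtain a clean formula relating $A^n(x)$ and $H^n(x)$ up to composition with isometries.

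First I would unwind item (1). Since $R_\theta\in\mathbb{SO}(2,\mathbb R)$ and $\mathbb{SO}(2,\mathbb R)$ is abelian, we have $A_\theta(x) = A(x)R_\theta = D(x)H(x)B(x)^{-1}R_\theta = D(x)\bigl(H(x)R_\theta\bigr)\bigl(R_\theta^{-1}B(x)^{-1}R_\theta\bigr) = D(x)H_\theta(x)B(x)^{-1}$, using commutativity to move $R_\theta$ past $B(x)^{-1}$. So if we set $\widetilde B(x) := R_\theta^{-1}B(x)$ and $\widetilde D(x):=D(x)$ — both still in $\mathbb{SO}(2,\mathbb R)$ — then $H_\theta(x) = \widetilde D(x)^{-1}A_\theta(x)\widetilde B(x)$, which is condition (1) for the pair $A_\theta,H_\theta$. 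For condition (2), $\widetilde B(Tx) = R_\theta^{-1}B(Tx) = R_\theta^{-1}s(x)D(x) = s(x)\widetilde D(x)$, using commutativity again. Hence $A_\theta$ and $H_\theta$ are quasi-conjugated with the families $\{R_\theta^{-1}B(x)\}$ and $\{D(x)\}$ and the same sign function $s$.

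For item (2), I would prove by induction on $n\ge 1$ the identity
\[
H^n(x) \;=\; \varepsilon_n(x)\, D(T^{n-1}x)^{-1}\, A^n(x)\, B(x),
\]
where $\varepsilon_n(x) = \prod_{j=0}^{n-2} s(T^j x)\in\{1,-1\}$ (empty product $=1$ for $n=1$). The base case $n=1$ is condition (1). For the inductive step, $H^{n+1}(x) = H(T^n x)H^n(x) = \bigl(D(T^n x)^{-1}A(T^n x)B(T^n x)\bigr)\bigl(\varepsilon_n(x)D(T^{n-1}x)^{-1}A^n(x)B(x)\bigr)$; now use condition (2) in the form $B(T^n x) = s(T^{n-1}x)D(T^{n-1}x)$ to cancel $B(T^n x)D(T^{n-1}x)^{-1} = s(T^{n-1}x)\,\mathrm{Id}$, which collapses the middle and produces $\varepsilon_{n+1}(x)D(T^n x)^{-1}A^{n+1}(x)B(x)$, as claimed. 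Since $D(T^{n-1}x)^{-1}$ and $B(x)$ are isometries of $\mathbb R^2$ and $\varepsilon_n(x)=\pm1$, taking operator norms gives $\|H^n(x)\| = \|A^n(x)\|$ for every $n$ and every $x$. Dividing by $n$, taking $\log$ and letting $n\to\infty$ yields $\lambda^+(H,x) = \lambda^+(A,x)$ wherever the limits exist (e.g. $\mu$-a.e. under the integrability hypothesis), hence $\lambda^+(A)=\lambda^+(H)$.

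The only mild obstacle is bookkeeping: one must be careful that the sign factors $s(T^j x)$ telescope correctly across the induction and that all the $\mathbb{SO}(2,\mathbb R)$ factors genuinely commute past $R_\theta$ (they do, since $\mathbb{SO}(2,\mathbb R)$ is a commutative group). No continuity of $B,D$ in $x$ is needed anywhere, since the argument is pointwise in $x$ and only uses that isometries preserve the norm.
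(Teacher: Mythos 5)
Your overall strategy matches the paper exactly: commute $R_\theta$ past the $\mathbb{SO}(2,\mathbb R)$ factor $B(x)$ to get $A_\theta(x)=D(x)H_\theta(x)B(x)^{-1}$ for item (1), and for item (2) induct to express $H^n(x)$ as a sign times $D(T^{n-1}x)^{-1}A^n(x)B(x)$ so that the norms agree and the Lyapunov exponents coincide. Your induction in item (2) is carried out cleanly, with the sign product indexed $j=0,\dots,n-2$, which is the correct bookkeeping.

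There is, however, a slip in the wrap-up of item (1). The displayed chain correctly yields $A_\theta(x)=D(x)H_\theta(x)B(x)^{-1}$, which already says that the \emph{original} families $\mathfrak B=\{B(x)\}$ and $\mathfrak D=\{D(x)\}$ satisfy condition (1) for the pair $(A_\theta,H_\theta)$; condition (2) then holds verbatim since the families (and the sign $s$) are unchanged. But the sentence that follows, introducing $\widetilde B(x):=R_\theta^{-1}B(x)$ and $\widetilde D(x):=D(x)$, undoes this: one has
\[
\widetilde D(x)^{-1}A_\theta(x)\widetilde B(x)=D(x)^{-1}A(x)R_\theta R_\theta^{-1}B(x)=D(x)^{-1}A(x)B(x)=H(x),
\]
not $H_\theta(x)$, so condition (1) fails for $\widetilde B,\widetilde D$; likewise $\widetilde B(Tx)=s(x)R_\theta^{-1}D(x)\ne s(x)\widetilde D(x)$ in general (commutativity gives $R_\theta^{-1}D(x)=D(x)R_\theta^{-1}$, not $D(x)$), so condition (2) fails too. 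The correct and simplest statement is the one your own computation proves: keep $\widetilde B=B$ and $\widetilde D=D$. With that one-line correction, the argument is the same as the paper's.
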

\begin{proof} Since $R_\theta\in\mathbb{SO}(2,\mathbb R)$ for every $\theta\in[0,2\pi]$ and $H(x)=(D(x))\sp{-1}\cdot A(x)\cdot B(x)$ for every $x\in X$, then we have
\[
\begin{aligned}
A_\theta(x)&=A(x)\cdot R_\theta\\
&=A(x)\cdot B(x)R_\theta(B(x))\sp{-1}\\
&=D(x)\left[(D(x))\sp{-1}\cdot A(x)\cdot B(x)\right]R_\theta(B(x))\sp{-1}\\
&=D(x)\cdot H(x)\cdot R_\theta\cdot(B(x))\sp{-1}\\
&=D(x)\cdot H_\theta(x)\cdot(B(x))\sp{-1}.
\end{aligned}
\]

On the other hand,  for each $n\in \mathbb N$ and $x\in X$ we have
\begin{equation}\label{eq:qc1}
\|A\sp n(x)\|=\|H\sp n(x)\|.
\end{equation}

In fact, arguing by induction it is not difficult to see that
\[
H\sp n(x)=\prod_{i=0}\sp{n-1}s(T\sp ix)\cdot (D(T\sp{n-1}x))\sp{-1}\cdot A\sp{n-1}(x)\cdot B(x)
\]
and therefore
$$
\|H\sp n(x)\|=\|(D(T\sp{n-1}x))\sp{-1}\cdot A\sp{n-1}(x)\cdot B(x)\|=\|A\sp{n}(x)\|.
$$

To conclude, it follows from Furstenberg--Kesten Theorem and \eqref{eq:qc1} that
\[
\begin{aligned}
\lambda\sp+(A)&=\lim_{n\to\infty}\frac{1}{n}\int\log\|A\sp n(y)\|d\mu(y)\\
&=\lim_{n\to\infty}\frac{1}{n}\int\log\|H\sp n(y)\|d\mu(y)\\
&=\lambda\sp+(H).
\end{aligned}
\]

\end{proof}

As an immediate consequence of Lemma~\ref{le:qc001}, if $A$ and $H$ are quasi--conjugated, then for every $\theta\in\mathbb{R}$ the cocycles $A_\theta$ and $H_\theta$ are quasi--conjugated.  Applying item (ii) of Lemma~\ref{le:qc001} to the quasi--conjugated cocycles $A_\theta$ and $H_\theta$ we obtain that
\begin{equation}\label{eq:quasiconj}
\lambda\sp+(A_\theta)=\lambda\sp+(H_\theta), \mbox{ for all }\theta \in\mathbb{R}.
\end{equation}

\begin{prop}\label{prop:qc1} If $A:X\to \mathbb{GL}(2,\R)$ is continuous and it has dominated splitting, then there exists $H:X\to \mathbb{GL}(2,\R)$ continuous which has dominated splitting  and for each $x\in X$, $H(x)$ is a lower triangular matrix. Moreover, $H_T$ is quasi--conjugated to $A_T$.
\end{prop}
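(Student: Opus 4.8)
The plan is to build $H$ by conjugating $A$ pointwise with rotations that straighten the dominated splitting into the coordinate axes. Concretely, for each $x\in X$ the dominated splitting gives lines $E(x), F(x)\subseteq\R^2$ varying continuously with $x$. The plan is to choose, for each $x$, a rotation $B(x)\in\mathbb{SO}(2,\R)$ (equivalently a point of $\mathbb{SO}(2,\R)/\{\pm\mathrm{Id}\}$, i.e. a choice of line up to sign) sending the $x$-axis $\R e_1$ to the line $E(x)$. Since $\R\mathrm P^1\cong\mathbb{SO}(2,\R)/\{\pm\mathrm{Id}\}$ and $x\mapsto E(x)$ is a continuous section of the projective bundle (which is trivial over $X$ since we are in $\R^2$ and $E$ is a continuous line field — orientability is handled up to the sign $s(x)$), such a continuous family $B(x)$ exists; define $D(x):=B(Tx)$, so condition (ii) of quasi--conjugation holds with $s(x)\equiv 1$ (or with a sign if one insists on picking $B$ from a continuous determination that flips orientation; either way the sign is harmless). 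Then set $H(x):=D(x)^{-1}A(x)B(x)$, which is exactly condition (i), so $H_T$ is quasi--conjugated to $A_T$, and $H$ is continuous because $A$, $B$, $D$ all are.

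Next I would check that $H(x)$ is lower triangular. By construction $B(x)$ maps $\R e_1$ onto $E(x)$, and by invariance $A(x)E(x)=E(Tx)$, while $D(x)=B(Tx)$ maps $\R e_1$ onto $E(Tx)$; hence $D(x)^{-1}A(x)B(x)$ maps $\R e_1$ into $D(x)^{-1}E(Tx)=\R e_1$. A $2\times2$ matrix preserving $\R e_1$ is upper triangular in the usual convention; to get \emph{lower} triangular one instead chooses $B(x)$ to send the $y$-axis $\R e_2$ onto $E(x)$ (equivalently precompose the above $B$ with the rotation by $\pi/2$, which is a fixed element of $\mathbb{SO}(2,\R)$ and so does not disturb the quasi--conjugation structure). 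With that convention $H(x)$ preserves $\R e_2$, i.e. $H(x)$ is lower triangular for every $x$.

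Finally I would verify that $H$ inherits dominated splitting. Quasi--conjugation multiplies $A^n(x)$ on the left and right by isometries (up to signs), and more relevantly it transports the splitting: the dominated splitting $\R^2=\widehat E(x)\oplus\widehat F(x)$ of $H$ is simply $\widehat E(x)=B(x)^{-1}E(x)=\R e_2$ and $\widehat F(x)=B(x)^{-1}F(x)$. Invariance $H(x)\widehat E(x)=\widehat E(Tx)$ and $H(x)\widehat F(x)=\widehat F(Tx)$ follows from the corresponding statements for $A$ together with $D(x)=\pm B(Tx)$. For the domination inequality (iii), note that for any vector $v$ one has $\|H^n(x)v\| = \|A^n(x)B(x)v\|$ up to the isometry $D(T^nx)^{-1}$ and the sign product, so $\|H^l(x)|\widehat E(x)\|=\|A^l(x)|E(x)\|$ and $\|H^{-l}(T^lx)|\widehat F(T^lx)\|=\|A^{-l}(T^lx)|F(T^lx)\|$; hence the product is $<\tfrac12$ with the same $l$. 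I expect the only genuinely delicate point to be the continuity of the rotation family $B(x)$ — one must either invoke triviality of the $\R\mathrm P^1$-bundle over $X$ or, more hands-on, use the continuous angle function of the line field $E$ (well defined modulo $\pi$) and lift it continuously, absorbing any $\pi$-ambiguity into the sign $s(x)$; everything else is the bookkeeping of composing isometries.
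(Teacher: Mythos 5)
Your high-level strategy---conjugating $A(x)$ by rotations that carry a coordinate axis onto the invariant line field---is essentially the paper's. But there is a genuine gap in how you define $D$ and, consequently, in the continuity of $H$. You set $D(x):=B(Tx)$ so that $s\equiv 1$, which makes $H(x)=B(Tx)^{-1}A(x)B(x)$. If the line field is non-orientable over $X$ (which can happen even for $X=S^1$: take $T=\mathrm{id}$ and $A(x)=R_{\pi x}\operatorname{diag}(2,\tfrac12)R_{\pi x}^{-1}$ on $S^1=\R/\Z$), there is \emph{no} continuous unit-vector lift, so $B$ must have sign jumps; at each such jump $H$ itself jumps by a sign, because the sign flip of $B(x)$ at $x_0$ is not compensated by $B(Tx)$, whose jumps sit at $T^{-1}$ of the jump set of $B$, a different set of points. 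Your remark that the $\pi$-ambiguity can be ``absorbed into $s(x)$'' cannot be implemented once $s\equiv 1$ has been fixed by fiat; with your $D$ there is no $s$ left to absorb anything.

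The paper's resolution is to drop the requirement that $B$ and $D$ be continuous (the quasi-conjugation definition explicitly allows discontinuous families) and to define $D$ \emph{locally from the normalized image}, not as $B\circ T$. Over a finite cover $\{V_i\}$ with continuous unit sections $u_i\in F_A$, it sets $B_i(x)=[\,u_i^\perp(x)\ u_i(x)\,]$ and $D_i(x)=[\,v_i^\perp(x)\ v_i(x)\,]$ with $v_i(x)=A(x)u_i(x)/\|A(x)u_i(x)\|$; then $B_i,D_i$ are continuous on $V_i$, so $H_i=D_i^{-1}AB_i$ is, and on overlaps the sign $s_{ij}$ appears squared: $H_j=(s_{ij}D_i)^{-1}A(s_{ij}B_i)=H_i$. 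Thus $H$ is globally well defined and continuous (Lemma~\ref{le:qc002}), while $B(Tx)=s(x)D(x)$ is deduced afterwards (Lemma~\ref{le:qc006}) rather than imposed. Your bookkeeping for the invariance of the transported splitting and the comparison of $\|H^l\|$ with $\|A^l\|$ via isometries is fine once this is repaired. A cosmetic difference worth noting: you straighten the weaker subbundle $E$ to $\R e_2$, whereas the paper straightens the dominating one $F_A$, which it later needs for the inequality $\eta(x)>|\lambda(x)|$ in Lemma~\ref{le:qc004}.
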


First we will explicitly define the cocycle $H:X\to \mathbb{GL}(2,\R)$. Given a unitary vector $u=(u_1,u_2)\in\R\sp2$, we denote by $u\sp\bot$ the orthogonal vector $(u_2,-u_1)$. The column matrix
\[
B_u:=[u\sp\bot\, \, \, u]=\left(
                       \begin{array}{cc}
                         u_2 & u_1 \\
                         -u_1 & u_2 \\
                       \end{array}
                     \right)
\]
is a rotation and therefore  it belongs to $\mathbb{SO}(2,\mathbb R)$. Given $A\in \mathbb{GL}(2,\mathbb R)$ with
\[
A=\left(\begin{array}{cc}
                         a & b \\
                         c & d \\
                       \end{array}
                     \right),
\]
denote $v=Au/\|Au\|$ and $D_u=[v\sp\bot\, \, \, v]$. It is easy to see that
\[
H_u=D_u\sp{-1}\cdot A\cdot B_u=\left(
                                 \begin{array}{cc}
                                   \lambda_u & 0 \\
                                   \sigma_u & \eta_u \\
                                 \end{array}
                               \right)
\]
and $\eta_u>0$.

On the other hand, $A:X\to \mathbb{GL}(2,\mathbb{R})$ has dominated sppliting, so for every $x\in X$ let $\mathbb R\sp 2=E_A(x)\oplus F_A(x)$ be the dominated splitting of $A$. Let $\{V_1,\ldots, V_n\}$ be an open cover of $X$ such that there exist continuous functions $u_i:V_i\to \mathbb{R}\sp2$ such that $u_i(x)\in F_A(x)$ and $\|u_i(x)\|=1$ for every $x\in V_i$,  $i=1,\ldots, n$. For every $x\in V_i$, $i=1,\ldots, n$, we write
\begin{eqnarray*}
v_i(x)&:=&A(x)u_i(x)/\|A(x)u_i(x)\|,\\
B_i(x)&:=&[u_i\sp\bot(x)\, \, \, u_i(x)], \qquad\qquad \mbox{ and}\\
D_i(x)&:=&[v_i\sp\bot(x)\, \, \, v_i(x)].
\end{eqnarray*}

Define
\[
H_i(x)=(D_i(x))\sp{-1}\cdot A(x)\cdot B_i(x)=\left(
                                 \begin{array}{cc}
                                   \lambda_i(x) & 0 \\
                                   \sigma_i(x) & \eta_i(x) \\
                                 \end{array}
                               \right).
\]

\bigskip

\noindent Finally, define the function $H:X\to\mathbb{GL}(2,\mathbb R)$ by $H(x)=H_i(x)$ when $x\in V_i$.

\begin{lema}\label{le:qc002}
The function $H:X\to\mathbb{GL}(2,\mathbb R)$ is well defined and therefore it is continuous.
\end{lema}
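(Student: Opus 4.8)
The plan is to show that on overlaps $V_i \cap V_j$ the two candidate definitions $H_i(x)$ and $H_j(x)$ agree, which is precisely the assertion that $H$ is well defined; continuity then follows because each $H_i$ is continuous on $V_i$ (being built from the continuous data $A$, $u_i$, and the continuous map $w\mapsto [w^\perp\ w]$) and the local pieces glue to a globally continuous map.

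First I would fix $x\in V_i\cap V_j$ and compare the unit vectors $u_i(x)$ and $u_j(x)$. Both lie in the one--dimensional space $F_A(x)$ and both are unit vectors, so $u_j(x)=\varepsilon\, u_i(x)$ for some sign $\varepsilon=\varepsilon(x)\in\{1,-1\}$. The key observation is that the column matrix construction is equivariant under this sign: $B_{u_j(x)} = [u_j^\perp(x)\ \ u_j(x)] = [\varepsilon u_i^\perp(x)\ \ \varepsilon u_i(x)] = \varepsilon\, B_{u_i(x)}$, since $(\varepsilon u)^\perp = \varepsilon (u^\perp)$ for a sign $\varepsilon$. Likewise $v_j(x) = A(x)u_j(x)/\|A(x)u_j(x)\| = \varepsilon\, v_i(x)$ and hence $D_j(x) = \varepsilon\, D_i(x)$. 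Therefore
\[
H_j(x) = (D_j(x))^{-1} A(x) B_j(x) = (\varepsilon D_i(x))^{-1} A(x) (\varepsilon B_i(x)) = \varepsilon^{-1}\varepsilon\, (D_i(x))^{-1} A(x) B_i(x) = H_i(x),
\]
so the two definitions coincide on the overlap and $H$ is well defined.

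For continuity, I would note that each ingredient in the formula for $H_i$ is continuous in $x\in V_i$: $u_i$ is continuous by the choice of cover, $x\mapsto A(x)u_i(x)$ is continuous and nonvanishing (as $A(x)$ is invertible), so $v_i$ is continuous, and the passage $w\mapsto [w^\perp\ \ w]$ from a unit vector to its associated rotation in $\mathbb{SO}(2,\mathbb{R})$ is continuous, as is matrix inversion on $\mathbb{GL}(2,\mathbb{R})$. Thus $H_i:V_i\to\mathbb{GL}(2,\mathbb{R})$ is continuous, and since $\{V_i\}$ is an open cover on whose overlaps the $H_i$ agree, the pasting lemma gives that $H:X\to\mathbb{GL}(2,\mathbb{R})$ is continuous. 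The only genuinely non-formal point is the sign bookkeeping above — making sure that the ambiguity $u_j = \pm u_i$ in the choice of unit vector in $F_A(x)$ cancels between $B$ and $D$ — and this is exactly the reason the definition of quasi--conjugation in item (2) allows the sign factor $s(x)\in\{1,-1\}$; I expect this cancellation to be the main (though mild) obstacle, everything else being routine continuity of algebraic operations.
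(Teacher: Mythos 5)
Your argument is correct and is essentially the same as the paper's: on an overlap $V_i\cap V_j$ one has $u_j(x)=\varepsilon u_i(x)$ with $\varepsilon=\pm1$ (the paper writes this sign as $s_{ij}(x)=\langle u_i(x),u_j(x)\rangle$), which gives $B_j=\varepsilon B_i$ and $D_j=\varepsilon D_i$, and the signs cancel in $H_j=(D_j)^{-1}AB_j$. The paper leaves the continuity conclusion implicit once well-definedness is established; your extra sentence via the pasting lemma fills that in without changing the approach.
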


\proof
Given $x\in V_i\cap V_j$ we have that
$$s_{ij}(x):=\langle u_i(x), u_j(x)\rangle=\pm1$$
and this is constant in each connected component of $V_i\cap V_j$. Therefore, it is easy to see that $B_j(x)=s_{ij}(x)B_i(x)$, $D_j(x)=s_{ij}(x)D_i(x)$ and $D_j(x)\sp{-1}=s_{ij}(x)D_i(x)\sp{-1}$ for every $x\in V_i\cap V_j$. Then we have
\[
\begin{aligned}
H_j(x)&=(D_j(x))\sp{-1}\cdot A(x)\cdot B_j(x)\\
&=s_{ij}(x)(D_i(x))\sp{-1}\cdot A(x)\cdot s_{ij}(x)B_i(x)\\
&=(s_{ij}(x))\sp2(D_i(x))\sp{-1}\cdot A(x)\cdot B_i(x)\\
&=H_i(x).
\end{aligned}
\]
\endproof

We can write the function $H:X\to\mathbb{GL}(2,\mathbb R)$ by
\[
H(x)=\left(
       \begin{array}{cc}
         \lambda(x) & 0 \\
         \sigma(x) & \eta(x) \\
       \end{array}
     \right)
\]
with $\eta(x)>0$. We also denote
\[
H\sp n(x)=\left(
            \begin{array}{cc}
              \lambda_n(x) & 0 \\
              \sigma_n(x) & \eta_n(x) \\
            \end{array}
          \right)
\]
where
\begin{equation}\label{eqp:11}
\lambda_n(x)=\prod_{j=0}\sp{n-1}\lambda(T\sp jx)\, \, \, \textrm{and}\, \, \, \eta_n(x)=\prod_{j=0}\sp{n-1}\eta(T\sp jx).
\end{equation}

\bigskip

\noindent
\begin{lema}\label{le:cq003} There is an continuous splitting $\mathbb R\sp2=E(x)\oplus F(x)$ invariant by  $H_T$.
\end{lema}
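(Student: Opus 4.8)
The plan is to build the splitting for $H$ by transporting, via the rotations $B_i$ used in the construction, the dominated splitting $\R^2=E_A(x)\oplus F_A(x)$ of $A$. Concretely, for $x\in V_i$ I would set
\[
E(x):=B_i(x)\sp{-1}E_A(x),\qquad F(x):=B_i(x)\sp{-1}F_A(x).
\]
The bundle $F$ is in fact the constant vertical line: since $u_i(x)$ spans $F_A(x)$ and $B_i(x)=[u_i^\perp(x)\ \ u_i(x)]$, we have $B_i(x)\sp{-1}u_i(x)=e_2$, so $F(x)=\R e_2$ for every $x$. This is trivially continuous and trivially $H_T$-invariant, because $H(x)$ is lower triangular and hence $H(x)e_2=(0,\eta(x))\in\R e_2$. (In particular $H(x)F(x)=F(Tx)$.)

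Next I would check that $E$ is well defined and continuous. On an overlap $V_i\cap V_j$ the identity $B_j(x)=s_{ij}(x)B_i(x)$ with $s_{ij}(x)\in\{1,-1\}$ — already established in Lemma~\ref{le:qc002} — gives $B_j(x)\sp{-1}E_A(x)=B_i(x)\sp{-1}E_A(x)$, since rescaling by $\pm1$ fixes every line through the origin; thus $E(x)$ does not depend on the chart. It is continuous because each $B_i$ is continuous on $V_i$ and $E_A$ is continuous by domination of $A$. Moreover, applying the isomorphism $B_i(x)\sp{-1}$ to $\R^2=E_A(x)\oplus F_A(x)$ shows at once that $\R^2=E(x)\oplus F(x)$.

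The remaining point — invariance of $E$ under $H_T$ — is where the compatibility between the local data at $x$ and at $Tx$ must be used. Choosing $j$ with $x\in V_j$ and $i$ with $Tx\in V_i$, the unit vector $v_j(x)$ lies in $A(x)F_A(x)=F_A(Tx)$, which is also spanned by the unit vector $u_i(Tx)$; hence $v_j(x)=\pm u_i(Tx)$ and therefore $D_j(x)=[v_j^\perp(x)\ \ v_j(x)]=\pm[u_i^\perp(Tx)\ \ u_i(Tx)]=\pm B_i(Tx)$. Substituting into $H(x)=D_j(x)\sp{-1}A(x)B_j(x)$ and using $A(x)E_A(x)=E_A(Tx)$ gives
\[
H(x)E(x)=\pm\,B_i(Tx)\sp{-1}A(x)E_A(x)=\pm\,B_i(Tx)\sp{-1}E_A(Tx)=E(Tx),
\]
the sign being irrelevant at the level of lines.

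I expect the only delicate aspect to be this sign bookkeeping (the various $s_{ij}$ and the $\pm1$ relating $v_j(x)$ to $u_i(Tx)$), which is harmless precisely because every subspace involved is one-dimensional, so $-\mathrm{Id}$ acts trivially on it. Note that domination of $A$ is used here only to guarantee continuity of $E_A$ and $F_A$; promoting $E\oplus F$ to a \emph{dominated} splitting for $H$, should a later step require it, would instead rely on the isometry identity $\|H\sp n(x)|E(x)\|=\|A\sp n(x)|E_A(x)\|$, but this is not needed for the present statement.
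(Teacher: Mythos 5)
Your proposal is correct and follows essentially the same route as the paper: define $F$ as the vertical line, $E(x)=B(x)^{-1}E_A(x)$, and verify $H$-invariance of $E$ by combining $H(x)=D(x)^{-1}A(x)B(x)$ with the identity $D(x)=\pm B(Tx)$ (which the paper writes as $B(Tx)=s(x)D(x)$). The only cosmetic difference is that you work chart-by-chart with the indices $i,j$ and check well-definedness on overlaps, whereas the paper invokes the globally chosen families $B,D$ and handles the $\pm$-sign via the function $s$; the underlying computation is identical.
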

\begin{proof}Define $F(x)=\rm{span}(\mathbf e_2)$ and $E(x)=(B(x))\sp{-1}E_A(x)$ where $\mathbf e_2=(0,1)$. Clearly $H(x)\left(F(x)\right)=F(Tx)$.
Note that

\begin{eqnarray*}
E(Tx)=(B(x))\sp{-1}E_A(Tx)&=&{\rm span}\left((B(Tx))\sp{-1}E_A(Tx)\right)\\
&=&{\rm span}\left(s(x)(D(x))\sp{-1}E_A(Tx)\right)\\
&=&(D(x))\sp{-1}E_A(Tx).
\end{eqnarray*}

\noindent If $v\in E_A(x)$, then $(B(x))\sp{-1}v\in E(x)$. The equality
\[
w=A(x)v=D(x)\cdot H(x)\cdot (B(x))\sp{-1}v\in E_A(Tx)
\]
implies that $(D(x))\sp{-1}w=H(x)(B(x))\sp{-1}v\in E(Tx)$ as required.
\end{proof}

\begin{lema}\label{le:qc004} For each $x\in X$, $\eta(x)>|\lambda(x)|$.
\end{lema}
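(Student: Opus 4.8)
The plan is to identify $\eta(x)$ and $\lambda(x)$ with invariants of $A$ attached to its dominated splitting, reduce the inequality to one involving $\det A(x)$, and then close it using the domination of $A$. Since $B_i(x)\mathbf e_2=u_i(x)$ and $D_i(x)\mathbf e_2=v_i(x)=A(x)u_i(x)/\|A(x)u_i(x)\|$, a direct computation gives
$$
H(x)\mathbf e_2=D_i(x)^{-1}A(x)B_i(x)\mathbf e_2=D_i(x)^{-1}A(x)u_i(x)=\|A(x)u_i(x)\|\,\mathbf e_2 ,
$$
so $\eta(x)=\|A(x)u_i(x)\|=\|A(x)|_{F_A(x)}\|$ is the expansion rate of $A(x)$ along $F_A$ (in particular $\mathbf e_2$ is an eigenvector of $H(x)$ with eigenvalue $\eta(x)$, so that $\eta(x)$ and $\lambda(x)$ are exactly the two eigenvalues of $H(x)$). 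Because $B_i(x),D_i(x)\in\mathbb{SO}(2,\R)$ have determinant $1$, $\det H(x)=\det A(x)$, so comparing with $\det H(x)=\lambda(x)\eta(x)$ yields $|\lambda(x)|=|\det A(x)|/\eta(x)$. Hence the claim $\eta(x)>|\lambda(x)|$ is equivalent to $\|A(x)|_{F_A(x)}\|^{2}>|\det A(x)|$.

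To handle $|\det A(x)|$, fix a unit vector $w_i(x)\in E_A(x)$ and use the area--distortion identity for $A(x)$ in the basis $\{w_i(x),u_i(x)\}$: writing $\angle_x:=\angle(E_A(x),F_A(x))$,
$$
|\det A(x)|=\frac{\|A(x)w_i(x)\wedge A(x)u_i(x)\|}{\|w_i(x)\wedge u_i(x)\|}=\|A(x)|_{E_A(x)}\|\cdot\|A(x)|_{F_A(x)}\|\cdot\frac{\sin\angle_{Tx}}{\sin\angle_x}.
$$
Substituting, the claim reduces to the pointwise estimate $\|A(x)|_{F_A(x)}\|>\|A(x)|_{E_A(x)}\|\cdot(\sin\angle_{Tx}/\sin\angle_x)$. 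Since $X$ is compact and $E_A,F_A$ are continuous and everywhere transverse, $x\mapsto\sin\angle_x$ is continuous and bounded between two positive constants, so the distortion factor is uniformly bounded; it therefore remains to compare $\|A(x)|_{F_A(x)}\|$ with $\|A(x)|_{E_A(x)}\|$, which is governed by the third property in the definition of a dominated splitting.

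The step I expect to be the real obstacle is exactly this last one: that third property is an estimate on $l$--fold iterates, whereas the lemma asks for the comparison above at a \emph{single} step. Observe that, since $\eta_n(x)=\prod_{j<n}\eta(T^jx)$ and $|\det A^{n}(x)|=\prod_{j<n}|\det A(T^jx)|$, iterating the domination already gives $\eta_n(x)^{2}>|\det A^{n}(x)|$ for every $x$ once $n$ is large (uniformly in $x$), i.e.\ the desired inequality for the iterated cocycle $H^{n}$. To descend to $n=1$ I would extract from the domination a continuous, strictly forward--invariant cone field around $F_A$ on which $A(x)$ acts as a strict projective contraction, and transport it through the isometries $B_i(x)^{-1}$ to the invariant splitting $E(x)\oplus F(x)$ of $H$ provided by Lemma~\ref{le:cq003}; the strict contraction then forces the eigenvalue $\eta(x)$ of $H(x)$ along $F(x)=\R\mathbf e_2$ to exceed $|\lambda(x)|$ at every point. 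Establishing the invariance and the strict one--step estimate for this cone field from the $l$--step hypothesis is the only delicate computation; everything else is the algebra above.
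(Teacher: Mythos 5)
Your algebra is sound and, up to one correction, it is exactly how the paper organizes this: $\eta(x)=\|A(x)|_{F_A(x)}\|$, $\lambda(x)\eta(x)=\det H(x)=\det A(x)$ (since $B_i,D_i\in\mathbb{SO}(2,\R)$), so the claim becomes $\|A(x)|_{F_A(x)}\|^{2}>|\det A(x)|$; your area--distortion identity, keeping the $\sin\angle_x$ in the denominator, is in fact a more careful version of the formula used in the paper's own proof. You are also right that the one-step comparison of $\|A(x)|_{E_A}\|$ with $\|A(x)|_{F_A}\|$ is the real issue. But the obstruction is sharper than you suspect: when the integer $l$ in the definition of dominated splitting is allowed to be larger than $1$, the statement of Lemma~\ref{le:qc004} is simply \emph{false}. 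Take $X=\{x_1,x_2\}$ with $T$ the transposition, $E_A\equiv\R\mathbf e_1$, $F_A\equiv\R\mathbf e_2$, $A(x_1)=\mathrm{diag}(4,1)$, $A(x_2)=\mathrm{diag}(1,100)$. The domination holds with $l=2$ (the two-step ratio is $4/100<1/2$), the construction yields $H=A$, and yet $\eta(x_1)=1<4=\lambda(x_1)$.

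It follows that the descent from $n$ to $1$ you sketch cannot work. Concretely, a strictly forward-invariant cone field $C(x)=\{|v_1|\le a(x)|v_2|\}$ around $F$ in the $H$-coordinates has variable aperture $a(x)$, and strict one-step contraction of cones only yields $|\lambda(x)|/\eta(x)<a(Tx)/a(x)$; the aperture may widen from $x$ to $Tx$ (as it must at $x_1$ above), so this does not force $|\lambda(x)|<\eta(x)$ pointwise. What the $l$-step domination does give, and you already noted it, is the iterated inequality $\eta_n(x)>|\lambda_n(x)|$ (equivalently $\eta_n(x)^2>|\det A^n(x)|$) uniformly for all sufficiently large $n$, and that is precisely what the paper actually uses afterward: Lemma~\ref{le:cq005} only needs $|\lambda_n(x)|/\eta_n(x)\to0$ and Lemma~\ref{prp:002} only needs a geometric bound $|\lambda_n(x)|/\eta_n(x)\le C\tau^n$. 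So the honest fix is either to prove and quote the $l$-step version of this lemma, or to add $l=1$ to the hypotheses (an adapted metric would also do it, but then $B_i$ and $D_i$ are no longer Euclidean rotations and the quasi-conjugation machinery of Section~\ref{sec:QuasConj} would have to be reworked). Be aware that the paper's own proof has exactly the same gap: the displayed inequality $\|A(x)|_{E}\|/\|A(x)|_{F}\|<1$ is asserted there without justification, and it is precisely the one-step statement that fails when $l>1$. Your instinct that this was ``the real obstacle'' was correct; the part that does not survive is the expectation that a cone-field argument closes it.
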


\begin{proof} Fix $x\in X$ and let $u\in F(x)$ and $v\in E(x)$ be unitary vectors. Since
\begin{equation}
\det A(x)=\|A(x)v\|\cdot \|A(x)u\|\cdot \sin(\angle(A(x)v,A(x)u))
\end{equation}
we have that
\[
\begin{aligned}
\frac{\|A(x)|{E(x)}\|}{\|A(x)|{F(x)}\|}&=\frac{\|A(x)v\|\cdot\|A(x)u\|}{\|A(x)u\|\sp2}\\
&=\frac{|\det(A(x))|}{\|A(x)u\|\sp2\sin(\angle(A(x)v,A(x)u))}\\
&<1.
\end{aligned}
\]
Hence, we conclude that
\[
|\det(A(x))|<\|A(x)u\|\sp2\sin(\angle(A(x)v,A(x)u))<\|A(x)u\|\sp2.
\]
Let $u$ be such that $(B(x))\sp{-1}u=\mathbf e_2$. Since $D(x)$ is an isometry and $B(Tx)=s(x) D(x)$ where $s(x)\in\{1, -1\}$, for every $x\in X$, we conclude that
\[
\|A(x)u\|=\|D(x)\cdot H(x)\cdot (B(x))\sp{-1}u\|=\|H(x)\mathbf e_2\|=\eta(x),
\]
and therefore
\[
|\det(A(x))|=|\det(H(x))|=|\lambda(x)|\cdot\eta(x)<(\eta(x))\sp2
\]
as desired.
\end{proof}

\begin{lema}\label{le:cq005} The splitting $\mathbb R\sp2=E(x)\oplus F(x)$ is dominated.
\end{lema}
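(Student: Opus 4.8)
The plan is to import the dominated splitting of $A$ directly into $H$ through the orthogonal families $\mathfrak B$ and $\mathfrak D$. By Lemma~\ref{le:cq003} the decomposition $\mathbb R\sp2=E(x)\oplus F(x)$ is a continuous $H_T$-invariant splitting, and by construction $\dim E(x)=\dim F(x)=1$; so the only thing left to verify is condition (3) in the definition of dominated splitting, and the natural candidate for the integer $l$ is the one furnished by the dominated splitting of $A$.

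First I would record the vector-wise form of the identity already exploited in Lemma~\ref{le:qc001}. Iterating $H(x)=(D(x))\sp{-1}A(x)B(x)$ together with $B(Tx)=s(x)D(x)$ gives, for every $n\ge1$,
\[
H\sp n(x)=s_n(x)\,(D(T\sp{n-1}x))\sp{-1}\cdot A\sp n(x)\cdot B(x),\qquad s_n(x):=\prod_{i=0}\sp{n-2}s(T\sp ix)\in\{1,-1\},
\]
and since $(D(T\sp{n-1}x))\sp{-1}\in\mathbb{SO}(2,\mathbb R)$ is an isometry we get $\|H\sp n(x)v\|=\|A\sp n(x)\,B(x)v\|$ for all $v\in\mathbb R\sp2$ and all $n\ge1$. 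On the other hand, from the construction $B(x)\mathbf e_2=u(x)\in F_A(x)$, so $B(x)\bigl(F(x)\bigr)=F_A(x)$, while by definition $B(x)\bigl(E(x)\bigr)=E_A(x)$; because $B(x)$ is an isometry, comparing unit vectors in these one-dimensional spaces yields
\[
\|H\sp n(x)|_{E(x)}\|=\|A\sp n(x)|_{E_A(x)}\|\qquad\text{and}\qquad \|H\sp n(x)|_{F(x)}\|=\|A\sp n(x)|_{F_A(x)}\|.
\]

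Finally I would pass to the inverse along the one-dimensional invariant bundle $F$: since $H\sp{-n}(T\sp nx)=(H\sp n(x))\sp{-1}$ and $F$ is one-dimensional and $H_T$-invariant, $\|H\sp{-n}(T\sp nx)|_{F(T\sp nx)}\|=\|H\sp n(x)|_{F(x)}\|\sp{-1}$, and likewise $\|A\sp{-n}(T\sp nx)|_{F_A(T\sp nx)}\|=\|A\sp n(x)|_{F_A(x)}\|\sp{-1}$; combining with the previous line gives $\|H\sp{-n}(T\sp nx)|_{F(T\sp nx)}\|=\|A\sp{-n}(T\sp nx)|_{F_A(T\sp nx)}\|$. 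Taking $l\ge1$ as in condition (3) for $A$, we conclude that for every $x\in X$
\[
\|H\sp l(x)|_{E(x)}\|\cdot\|H\sp{-l}(T\sp lx)|_{F(T\sp lx)}\|=\|A\sp l(x)|_{E_A(x)}\|\cdot\|A\sp{-l}(T\sp lx)|_{F_A(T\sp lx)}\|<\frac12,
\]
which together with Lemma~\ref{le:cq003} shows that $\mathbb R\sp2=E(x)\oplus F(x)$ is dominated for $H_T$. The argument is essentially bookkeeping; the only delicate point is keeping track of which coordinate axis $B(x)$ sends to which bundle of $A$ (and noting that Lemma~\ref{le:qc004}, $\eta(x)>|\lambda(x)|$, is only the pointwise shadow of this estimate and does not by itself give domination, since domination requires the uniform exponential control that we are transporting from $A$).
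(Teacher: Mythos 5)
Your proof is correct, and it takes a genuinely different route from the paper's. The paper re-derives domination from scratch for $H$: it invokes Lemma~\ref{le:qc004} ($\eta(x)>|\lambda(x)|$), which by continuity and compactness of $X$ gives a uniform $\tau<1$ with $|\lambda_n(x)|/\eta_n(x)\le\tau^n\to 0$, and then uses the determinant/angle identity (together with a uniform lower bound on the angle between $E$ and $F$) to bound $\|H^n(x)|_{E(x)}\|/\|H^n(x)|_{F(x)}\|$ by a constant times $|\lambda_n(x)|/\eta_n(x)$. You instead transport the domination estimate directly from $A$ to $H$ through the quasi-conjugacy: since $B(x)$ and $(D(T^{n-1}x))^{-1}$ are isometries and $B(x)$ carries $E(x)\oplus F(x)$ onto $E_A(x)\oplus F_A(x)$, the restricted operator norms coincide exactly, so the very same $l$ that dominates $A$ dominates $H$. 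Your argument is shorter, gives the sharper conclusion (the same constant $l$), and makes Lemma~\ref{le:qc004} superfluous for this particular lemma; the paper's argument has the side benefit of exhibiting the domination bound explicitly in terms of the entries $\lambda_n,\eta_n$, which is the form that gets used later in Section~\ref{ssec:calculo}. One small caveat: your closing remark that Lemma~\ref{le:qc004} ``does not by itself give domination'' slightly undersells the paper's route, since the pointwise inequality plus continuity and compactness of $X$ does yield the required uniform exponential control; the paper's proof is correct, just less direct than yours.
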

\begin{proof} We remark that by construction, there exists $\alpha>0$ such that for every $x\in X$,
$$\angle(E(x), F(x))>\alpha.$$
Therefore, there exists $C>0$ such that
$$\frac1{\sin\left(\angle(E(x), F(x))\right)}<C.$$

\noindent Taking $v\in E(x)$ unitary and $\mathbf e_2\in F(x)$, it follows that
\[
\begin{aligned}
\frac{\|H\sp n(x)|{E(x)}\|}{\|H\sp n(x)|{F(x)}\|}&=\frac{\|H\sp n(x)v\|}{\|H\sp n(x)\mathbf e_2\|}\\
&=\frac{\|H\sp n(x) v\|\cdot\|H\sp n(x)\mathbf e_2\|}{\|H\sp n(x)\mathbf e_2\|\sp2}\\
&=\frac{|\det(H\sp n(x))|}{\|H\sp n(x)\mathbf e_2\|\sp2\sin(\angle(H\sp n(x) v,H\sp n(x)\mathbf e_2))}\\
&<C\cdot\frac{|\det(H\sp n(x))|}{\|H\sp n(x)\mathbf e_2\|\sp2}\\
&=C\cdot\frac{|\lambda_n(x)|}{\eta_n(x)}.
\end{aligned}
\]
From Lemma~\ref{le:qc004},  $|\lambda_n(x)|/\eta_n(x)\to 0$ when $n\to\infty$. This implies that
\[
\frac{\|H\sp n(x)|{E(x)}\|}{\|H\sp n(x)|{F(x)}\|}<C\cdot\frac{|\lambda_n(x)|}{\eta_n(x)}<1
\]
for $n$ large enough, and therefore $H$ has dominated splitting as desired.
\end{proof}

\begin{lema}\label{le:qc006}  The cocycle $H_T$ is quasi--conjugated to $A_T$.
\end{lema}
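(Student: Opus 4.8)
The plan is to produce directly the two families of matrices required by the definition of quasi--conjugation, built from the local data $B_i,D_i$ already at hand, and then verify conditions (1) and (2) of that definition. The key observation that makes this painless is that quasi--conjugation imposes \emph{no} regularity on the families $\mathfrak B$ and $\mathfrak D$ (this is explicitly remarked after the definition), so we are allowed to glue the local pieces by an arbitrary set--theoretic selection rather than worrying about continuity.

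First I would fix, for each $x\in X$, the smallest index $i=i(x)$ with $x\in V_{i(x)}$, and set $B(x):=B_{i(x)}(x)$ and $D(x):=D_{i(x)}(x)$. By construction $B(x),D(x)\in\mathbb{SO}(2,\mathbb R)$, so $\mathfrak B=\{B(x)\}$ and $\mathfrak D=\{D(x)\}$ are admissible families. Condition (1) is then immediate from Lemma~\ref{le:qc002}: the cocycle $H$ was defined by $H(x)=H_{i(x)}(x)$, and $H_{i(x)}(x)=(D_{i(x)}(x))\sp{-1}\cdot A(x)\cdot B_{i(x)}(x)=(D(x))\sp{-1}\cdot A(x)\cdot B(x)$.

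For condition (2), fix $x$ and write $i=i(x)$, $j=i(Tx)$. By definition $B(Tx)=[u_j\sp\bot(Tx)\,\,\,u_j(Tx)]$, where $u_j(Tx)$ is a unit vector of $F_A(Tx)$, while $D(x)=[v_i\sp\bot(x)\,\,\,v_i(x)]$ with $v_i(x)=A(x)u_i(x)/\|A(x)u_i(x)\|$. Here the invariance of the dominated splitting, $A(x)F_A(x)=F_A(Tx)$, together with $u_i(x)\in F_A(x)$, shows that $v_i(x)$ is a unit vector of the one--dimensional subspace $F_A(Tx)$; hence $v_i(x)=s(x)\,u_j(Tx)$ for some $s(x)\in\{1,-1\}$. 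Since $(sw)\sp\bot=s\,w\sp\bot$ whenever $s=\pm1$, this gives $D(x)=[s(x)u_j\sp\bot(Tx)\,\,\,s(x)u_j(Tx)]=s(x)\,B(Tx)$, and multiplying by $s(x)$ (using $s(x)\sp2=1$) yields $B(Tx)=s(x)D(x)$, which is exactly condition (2). This finishes the proof.

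The only real obstacle is conceptual rather than computational: one must notice that the matrices $B_i$ and $D_i$ are canonically determined only up to a locally constant sign on the overlaps $V_i\cap V_j$, so in general there is no continuous global choice, and the definition of quasi--conjugation was set up precisely to tolerate this. The second point to keep in mind is that the bridge between $D(x)$ (built from the image direction $A(x)u_i(x)$, which lives over $Tx$) and $B(Tx)$ (built from the chosen direction $u_j(Tx)$ over $Tx$) is nothing but the $A$--invariance of the subbundle $F_A$; without it the two unit vectors in $F_A(Tx)$ would not be parallel and condition (2) would break down.
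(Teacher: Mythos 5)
Your proof is correct and takes essentially the same route as the paper's: both select, for each $x$, the smallest index $i(x)$ with $x\in V_{i(x)}$ to build $\mathfrak B,\mathfrak D$ from the local data $B_i,D_i$, and both verify condition (2) by noting that $v_{i(x)}(x)$ and $u_{i(Tx)}(Tx)$ are unit vectors in the same line $F_A(Tx)$, hence differ by a sign $s(x)\in\{\pm1\}$. Your write-up merely spells out condition (1) and the role of the invariance $A(x)F_A(x)=F_A(Tx)$ more explicitly than the paper does.
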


\begin{proof}
Let $x\in V_i$ with $Tx\in V_j$. Note that by construction there exists $s_{ij}(x)\in\{1,-1\}$ such that $v_i(x)=s_{ij}(x)\cdot u_j(Tx)$ and therefore $D_i(x)=s_{ij}(x)B_j(Tx)$. We define the families $\mathfrak B$ and $\mathfrak D$ inductively as follows:
\begin{enumerate}
\item for each $x\in V_1$, we let $B(x):=B_1(x)$ and $D(x):=D_1(x)$,
\item for each $x\in V_i\setminus \cup_{j<i}V_j$, we let $B(x):=B_i(x)$ and $D(x):=D_i(x)$
\end{enumerate}
and our claim it follows from the construction above.
\end{proof}


\section{Proof of Theorem~\ref{teo:main}.}\label{sec:pteo02}

This section is devoted to prove  Theorem~\ref{teo:main}. We remark that Theorem~\ref{teo:main} follows from the next result.

\begin{teo}\label{teo:part2}
 Assume that the cocycle $A:X\to \mathbb{GL}^+(2,\mathbb{R})$ is continuous and has dominated splitting. Then, there exists $\varepsilon>0$ such that:
\begin{enumerate}
\item There exist one--parametric continuous functions
$$a_\theta, c_\theta, u_\theta:X\to\mathbb R,\qquad\theta\in(-\varepsilon,\varepsilon),$$
such that for all $\theta\in(-\varepsilon, \varepsilon)$, we have
\begin{eqnarray*}
\lambda\sp+(A_\theta)&=&\lambda\sp+(A)+\lambda\sp-(A)\\
& &-\int_X\log\left(a_\theta(x)-c_\theta(x)u_\theta(Tx)\right)d\mu(x).
\end{eqnarray*}

\item For each $x\in X$, the functions $(-\varepsilon,\varepsilon)\ni\theta\mapsto a_\theta(x), c_\theta(x), u_\theta(x)$ are real analytic.
\item The function $\lambda\sp+:(-\varepsilon,\varepsilon)\to\mathbb R$ given by $\lambda\sp+(\theta):=\lambda\sp+(A_\theta)$ is real analytic and
\[
\frac{d\sp2\lambda\sp+(0)}{d\theta\sp2}<0.
\]
\end{enumerate}
\end{teo}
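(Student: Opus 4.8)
The strategy is to exploit the quasi--conjugation from Section~\ref{sec:QuasConj}: by Proposition~\ref{prop:qc1} and \eqref{eq:quasiconj} it suffices to work with the lower triangular cocycle $H$, since $\lambda\sp+(A_\theta)=\lambda\sp+(H_\theta)$ for every $\theta$, and $H$ inherits the dominated splitting. Writing
\[
H(x)=\begin{pmatrix}\lambda(x)&0\\ \sigma(x)&\eta(x)\end{pmatrix},\qquad \eta(x)>|\lambda(x)|>0,
\]
one has $R_\theta=\begin{pmatrix}\cos\theta&-\sin\theta\\ \sin\theta&\cos\theta\end{pmatrix}$, so $H_\theta(x)=H(x)R_\theta$ is an explicit matrix whose entries are real analytic in $\theta$. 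Because $\det H_\theta(x)=\det H(x)=\lambda(x)\eta(x)$ is independent of $\theta$ and by continuity the strong domination of $H$ persists for $|\theta|<\varepsilon$ (the dominated splitting varies continuously with the cocycle), the splitting $\mathbb R\sp2=E_\theta(x)\oplus F_\theta(x)$ exists and depends analytically on $\theta$ for $x$ fixed; I would record $F_\theta(x)$ as a graph over the first coordinate, i.e. generated by a vector $(1,u_\theta(x))$, which defines the function $u_\theta$ and makes $\theta\mapsto u_\theta(x)$ real analytic (invariant--section / implicit--function argument applied to the analytic family $H_\theta$). The invariance $H_\theta(x)F_\theta(x)=F_\theta(Tx)$ then yields a cohomological-type recursion for $u_\theta$ whose coefficients $a_\theta(x),c_\theta(x)$ (the relevant entries of $H_\theta(x)$, essentially $\lambda\cos\theta$ type terms and the off-diagonal combination) are manifestly analytic in $\theta$ and continuous in $x$; this gives item (ii).

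For item (i), I would introduce the adapted metric along the splitting (the "new metric" announced in the introduction): measure the expansion of $H_\theta$ along $E_\theta$ in the $E_\theta$-direction and along $F_\theta$ in the $F_\theta$-direction. Since the splitting is one-dimensional and dominated, $\lambda\sp+(A_\theta)=\lambda\sp+(H_\theta)$ equals $\int_X\log\|H_\theta(x)|_{F_\theta(x)}\|_{new}\,d\mu$ and $\lambda\sp-$ the analogous integral along $E_\theta$, with the two exponents summing to $\int\log|\det H_\theta|\,d\mu=\int\log(\lambda\eta)\,d\mu=\lambda\sp+(A)+\lambda\sp-(A)$ (the last equality because at $\theta=0$ the diagonal entries of $H$ realize the exponents of $A$, by Lemma~\ref{le:qc004} and the Furstenberg--Kesten computation in Lemma~\ref{le:qc001}). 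Expressing the expansion rate along $E_\theta$ in coordinates adapted to the graph $F_\theta$, a direct computation gives $\lambda\sp-(A_\theta)=\int_X\log\bigl(a_\theta(x)-c_\theta(x)u_\theta(Tx)\bigr)d\mu(x)$, and subtracting from the constant $\lambda\sp+(A)+\lambda\sp-(A)$ produces the stated formula for $\lambda\sp+(A_\theta)$. Here the key point is that in the adapted coordinates the restriction of $H_\theta(x)$ to $E_\theta(x)$ has determinant-type normalization, so its logarithm collapses to the single term $\log(a_\theta - c_\theta u_\theta\circ T)$; this is exactly the Shub--Wilkinson mechanism.

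Item (iii): real analyticity of $\theta\mapsto\lambda\sp+(A_\theta)$ follows by differentiating under the integral sign, which is justified because $a_\theta(x)-c_\theta(x)u_\theta(Tx)$ is analytic in $\theta$, jointly continuous, and bounded away from $0$ and $\infty$ uniformly on $X\times(-\varepsilon,\varepsilon)$ (compactness of $X$ plus the strict domination bound). For strict concavity at $\theta=0$ I would compute $\tfrac{d\sp2}{d\theta\sp2}\log(a_\theta-c_\theta u_\theta\circ T)$ at $\theta=0$, again differentiating the recursion for $u_\theta$ to get $\dot u_0,\ddot u_0$, integrate over $X$, and use the $T$-invariance of $\mu$ to telescope/cancel the linear terms involving $\dot u_0\circ T$ against $\dot u_0$; what should survive is a strictly negative quantity — morally $-\int (\text{something})\sp2 d\mu$ plus lower-order curvature terms controlled by the domination gap — so that $\lambda\sp+{}''(0)<0$. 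The main obstacle, and the part needing genuine care, is precisely this second-derivative computation: one must choose the parametrization of $F_\theta$ (the graph coordinate $u_\theta$) so that first-order terms cancel after integration against the invariant measure, and then show the remaining second-order contribution is strictly, not just weakly, negative — the strictness is where the orientation-preserving hypothesis and $\eta>|\lambda|$ must be used, ruling out the degenerate case where the rotation direction happens to be tangent to the invariant bundle. Aside from this, the analyticity bookkeeping (uniform bounds on the analytic family $u_\theta$ and its derivatives, via a normally convergent series / contraction in a space of analytic sections) is routine but must be set up carefully to license differentiation under the integral.
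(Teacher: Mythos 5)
Your overall scheme---reduce to the lower triangular $H$ via quasi--conjugation, parametrize the invariant line $F_\theta$ by a graph coordinate $u_\theta$, measure expansion in an adapted metric, and differentiate the resulting integral---is the same as the paper's. Two of the three items, however, are not actually carried out, and your heuristics for item (iii) point in the wrong direction.

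A small but real slip: you generate $F_\theta(x)$ by $(1,u_\theta(x))$, but since $H$ is lower triangular with $\eta>|\lambda|$ the dominating line at $\theta=0$ is $F_0(x)=\mathrm{span}(\mathbf e_2)$, so the correct parametrization is $F_\theta(x)=\mathrm{span}((u_\theta(x),1))$ with $u_0\equiv0$; with your choice $u_0$ would be infinite. Once fixed, the formula for $\lambda^+(A_\theta)$ falls out exactly as you say via $t_\theta=\det H_\theta/(a_\theta-c_\theta\,u_\theta\circ T)$, and your observation that one may equivalently compute $\lambda^-(A_\theta)=\int\log(a_\theta-c_\theta\,u_\theta\circ T)\,d\mu$ and subtract from the $\theta$--independent sum $\int\log|\det H|\,d\mu$ is a correct (and essentially identical) rephrasing of Proposition~\ref{prop:TD01}.

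For item (ii) you invoke an ``implicit--function argument applied to the analytic family,'' but $u_\theta(x)$ is defined by the infinite dynamical condition of invariance, not a finite equation, so this is not a literal IFT application. The paper instead complexifies $\theta$ via $z\mapsto S_z$ (so that $S_{e^{i\theta}}=R_\theta$), shows the family $\{H_z\}$ is holomorphic with dominated splitting persisting for $z$ near the unit circle, and applies Montel's theorem to the iterated images of the invariant disks from Lemma~\ref{lmp:01} to get holomorphy of $z\mapsto\xi_z(x)$ (Proposition~\ref{lmp:02}); analyticity of $\theta\mapsto u_\theta(x)$ is then read off on the circle. Your alternative (a contraction in a Banach space of analytic sections) is plausible but would need to be set up in full.

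The genuine gap is item (iii), which you acknowledge not proving. Your heuristic---that linear terms ``telescope/cancel'' under $T$--invariance and what survives is ``morally $-\int(\cdot)^2$''---is not what happens. In the paper the first derivative is $\frac{d\lambda^+(0)}{d\theta}=\int\frac{\sigma}{\lambda}\,\dot u_0\circ T\,d\mu$, which does \emph{not} vanish in general (only when $\sigma\equiv0$), and the second derivative is
\[
\frac{d^2\lambda^+(0)}{d\theta^2}
=\int\!\left[\frac{2\eta\,\dot u_0\circ T}{\lambda}+1+\frac{\sigma\,\ddot u_0\circ T}{\lambda}+\left(\frac{\sigma\,\dot u_0\circ T}{\lambda}\right)^{\!2}\right]d\mu,
\]
where the visible square enters with a $+$ sign. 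The source of negativity is the first term: solving the recursion $\dot u_0(x)=\frac{\lambda}{\eta}(\dot u_0-1)\circ T^{-1}$ gives $\dot u_0=-\sum_{k\ge1}\lambda_k/\eta_k<0$, whence $\frac{2\eta\dot u_0\circ T}{\lambda}<-2$ pointwise. This settles the case $\sigma\equiv0$ (Lemma~\ref{le:maximo}); the general case is handled by observing that the $\sigma$--dependent part of the formula is a continuous quadratic functional $F_1$ in $\sigma$, and then scaling $\sigma=t_0\sigma_0$ with $\|\sigma_0\|$ small to reduce to the $\sigma$--small regime (Proposition~\ref{prp:02}). None of this is ``routine bookkeeping,'' and the cancellation mechanism you propose would not produce it. Until you supply the explicit recursions for $\dot u_0,\ddot u_0$ and the scaling argument in $\sigma$, the strict concavity remains unproved.
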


From Proposition~\ref{prop:qc1}, it is enough to give the proof for the cocycle $H_\theta$.

\subsection{Implicit expression for the Lyapunov exponent.}\label{ssp:imp_exp}

Fix $\varepsilon>0$ such that $ H_\theta$ has dominated splitting. We denote the invariant splitting of $H_\theta$ by $E_\theta(x)\oplus F_\theta(x)=\mathbb R\sp2$. Since $F_0(x)={\rm span}(\mathbf e_2)$, there exist a continuous function $(-\varepsilon,\varepsilon)\times X\ni(\theta,x)\mapsto u_\theta(x)$ such that $F_\theta(x)={\rm span}\left((u_\theta(x),1)\right)$. Therefore by construction $u_0(x)=0$ for each $x\in X$.

\medskip

Consider $X\times\mathbb R\sp2=\cup_{x\in X}\{x\}\times\mathbb R\sp2$ and denote $T_x=\{x\}\times\mathbb R\sp2$. In all the preceding, we consider the standard inner product over $T_x\cong\mathbb R\sp2$. For $(x,\theta)$, we consider $\langle\cdot,\cdot\rangle_{x,\theta}$ the inner product over $T_x$ such that the set $\left\{(1,-u_\theta(x)), (u_\theta(x),1)\right\}$ is an orthonormal base. Then, for each $\theta\in[0,2\pi]$ and $u,v\in\mathbb R\sp2$ the map $x\mapsto\langle u,v\rangle_{x,\theta}$ is continuous. Since $X$ is compact and all metric in $\mathbb R\sp2$ are equivalent, we can choose a constant $C>0$ such that $C\sp{-1}\|\cdot\|_{x,\theta}\le \|\cdot\|\le C\|\cdot\|_{x,\theta}$ for all $x\in X$. Moreover, for every $\theta\in[0,2\pi]$, we have

\begin{equation}\label{eqp:04}
\lambda\sp+(\theta):=\lambda\sp+(H_\theta)=\int\log\|H_\theta(y)|{F_\theta(y)}\|_{Ty,\theta}d\mu(y).
\end{equation}

In fact, it follows from Birkhoff's Theorem that for $\mu$--a.e. $x\in X$ we have

\begin{eqnarray*}
\lambda\sp+(\theta)&=&\lim_{n\to\infty}\frac{1}{n}\log\|H\sp n(x)(u_\theta(x),1)\|\\
&=&\lim_{n\to\infty}\frac{1}{n}\log\|H\sp n(x)(u_\theta(x),1)\|_{T\sp nx,\theta}\\
&=&\lim_{n\to\infty}\frac{1}{n}\sum_{j=0}\sp{n-1}\log\|H_\theta(T\sp jx)|{F_\theta(T\sp jx)}\|_{T\sp{j+1}x,\theta}\\
&=&\int\log\|H_\theta(y)|{F_\theta(y)}\|_{Ty,\theta}d\mu(y).
\end{eqnarray*}

Item (i) of Theorem~\ref{teo:part2} follows directly from the next result.

\begin{prop}\label{prop:TD01}
For every $\theta\in[0,2\pi]$ define
\[
H_\theta(x)=\left(
                 \begin{array}{cc}
                   a_\theta(x) & b_\theta(x) \\
                   c_\theta(x) & d_\theta(x) \\
                 \end{array}
               \right)\quad\textrm{for all $x\in X$.}
\]
Then, there exists $\varepsilon>0$ such that for $|\theta|<\varepsilon$, we have
\begin{equation}\label{eqp:05}
\lambda\sp+(\theta)=\lambda\sp+(0)+\lambda\sp-(0)-\int_X\log(a_\theta(x)-
c_\theta(x) u_\theta(Tx))d\mu(x).
\end{equation}

\end{prop}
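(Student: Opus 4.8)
The plan is to compute the quantity $\|H_\theta(x)|F_\theta(x)\|_{Tx,\theta}$ appearing in \eqref{eqp:04} in terms of the matrix entries $a_\theta,b_\theta,c_\theta,d_\theta$ and the slope function $u_\theta$. First I would observe that, since $(u_\theta(x),1)$ spans $F_\theta(x)$ and by definition of the inner product $\langle\cdot,\cdot\rangle_{x,\theta}$ the vector $(u_\theta(x),1)$ has $\|\cdot\|_{x,\theta}$--norm equal to $\|(u_\theta(x),1)\|_{x,\theta}=\sqrt{1+u_\theta(x)^2}\cdot$(normalisation); in fact $(u_\theta(x),1)$ together with $(1,-u_\theta(x))$ is declared to be an \emph{orthonormal} basis, so $\|(u_\theta(x),1)\|_{x,\theta}=1$ and more generally for a vector $w$ written in that basis the $\|\cdot\|_{x,\theta}$--norm is just the Euclidean norm of its coordinates in that basis, i.e. $\|w\|_{x,\theta}=\|w\|/\sqrt{1+u_\theta(x)^2}$ since the basis is an orthogonal (Euclidean) basis scaled by $\sqrt{1+u_\theta(x)^2}$. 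Hence
\[
\|H_\theta(x)|F_\theta(x)\|_{Tx,\theta}=\frac{\|H_\theta(x)(u_\theta(x),1)\|}{\|(u_\theta(x),1)\|}\cdot\frac{\|(u_\theta(x),1)\|}{\sqrt{1+u_\theta(x)^2}}\cdot\frac{\sqrt{1+u_\theta(Tx)^2}}{\sqrt{1+u_\theta(Tx)^2}},
\]
which I will rearrange into a clean telescoping form below.

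Next I would use invariance: $H_\theta(x)F_\theta(x)=F_\theta(Tx)$ means $H_\theta(x)(u_\theta(x),1)$ is a scalar multiple of $(u_\theta(Tx),1)$. Writing $H_\theta(x)(u_\theta(x),1)=(a_\theta(x)u_\theta(x)+b_\theta(x),\,c_\theta(x)u_\theta(x)+d_\theta(x))$ and comparing second coordinates, the scalar is $c_\theta(x)u_\theta(x)+d_\theta(x)$, so $\|H_\theta(x)(u_\theta(x),1)\|=|c_\theta(x)u_\theta(x)+d_\theta(x)|\cdot\sqrt{1+u_\theta(Tx)^2}$. Therefore
\[
\|H_\theta(x)|F_\theta(x)\|_{Tx,\theta}=\frac{|c_\theta(x)u_\theta(x)+d_\theta(x)|\sqrt{1+u_\theta(Tx)^2}}{\sqrt{1+u_\theta(x)^2}}.
\]
Plugging this into \eqref{eqp:04}, the terms $\log\sqrt{1+u_\theta(Ty)^2}$ and $\log\sqrt{1+u_\theta(y)^2}$ telescope under $\int\cdots d\mu$ by $T$--invariance of $\mu$, and cancel. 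Thus $\lambda^+(\theta)=\int_X\log|c_\theta(x)u_\theta(x)+d_\theta(x)|\,d\mu(x)$. (For $\theta$ near $0$ one checks $c_\theta u_\theta+d_\theta$ has a fixed sign, since at $\theta=0$ it equals $d_0=\eta>0$.)

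It remains to convert $\int\log(c_\theta u_\theta+d_\theta)\,d\mu$ into the asserted right-hand side $\lambda^+(0)+\lambda^-(0)-\int\log(a_\theta(x)-c_\theta(x)u_\theta(Tx))\,d\mu(x)$. The key identity is the determinant relation: since $H_\theta(x)=H(x)R_\theta$ and $\det R_\theta=1$, we have $a_\theta d_\theta-b_\theta c_\theta=\det H_\theta(x)=\det H(x)=\lambda(x)\eta(x)$, and moreover $\int\log\det H_\theta(x)\,d\mu=\int\log|\lambda(x)|\,d\mu+\int\log\eta(x)\,d\mu=\lambda^-(0)+\lambda^+(0)$ — the first integral is the lower exponent of $H$ (the $E$--direction contracts at rate $\lambda$ by domination and Lemma~\ref{le:qc004}), the second is $\lambda^+(0)$ as computed in \eqref{eqp:04} at $\theta=0$ where $u_0\equiv0$ forces $\|H_0(y)|F_0(y)\|=\eta(y)$. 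Now one computes directly, using $b_\theta=(a_\theta d_\theta-\det H_\theta)/c_\theta$ and the relation $u_\theta(Tx)=(a_\theta(x)u_\theta(x)+b_\theta(x))/(c_\theta(x)u_\theta(x)+d_\theta(x))$ from invariance (first coordinate over second), that
\[
(c_\theta(x)u_\theta(x)+d_\theta(x))\bigl(a_\theta(x)-c_\theta(x)u_\theta(Tx)\bigr)=a_\theta(x)d_\theta(x)-b_\theta(x)c_\theta(x)=\det H_\theta(x),
\]
an algebraic simplification I will carry out (it is a two-line computation clearing the denominator $c_\theta u_\theta+d_\theta$). Taking $\log$ and integrating against $\mu$ then gives $\int\log(c_\theta u_\theta+d_\theta)\,d\mu=\int\log\det H_\theta\,d\mu-\int\log(a_\theta-c_\theta u_\theta(T\,\cdot))\,d\mu=\lambda^+(0)+\lambda^-(0)-\int_X\log(a_\theta(x)-c_\theta(x)u_\theta(Tx))\,d\mu(x)$, which is \eqref{eqp:05}.

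The main obstacle is twofold: first, being careful with the normalisation in the definition of $\langle\cdot,\cdot\rangle_{x,\theta}$ so that the Euclidean-vs-adapted norm conversion factors are exactly $\sqrt{1+u_\theta^2}$ and telescope cleanly (this requires that $u_\theta$ is bounded uniformly in $x$, which holds by compactness of $X$ and continuity of $(\theta,x)\mapsto u_\theta(x)$, shrinking $\varepsilon$ if needed); and second, justifying the sign/positivity of $a_\theta(x)-c_\theta(x)u_\theta(Tx)$ and of $c_\theta(x)u_\theta(x)+d_\theta(x)$ for $|\theta|<\varepsilon$ so that the logarithms are well defined — at $\theta=0$ the former is $a_0(x)=\lambda(x)$, which need not be positive, so one instead argues via $\det H_\theta>0$ and the positivity of $c_\theta u_\theta+d_\theta$ (which at $\theta=0$ equals $\eta>0$ and stays positive by continuity on the compact set $X$) to deduce the product, hence each factor after fixing the branch, has constant sign; strictly, the cleanest route is to keep absolute values throughout and note $|a_\theta d_\theta-b_\theta c_\theta|=\det H_\theta$ so no sign issue arises inside the final integrand once we write $\log|a_\theta-c_\theta u_\theta(T\cdot)|$ — and continuity in $\theta$ plus the $\theta=0$ computation pins down that this equals $\log(a_\theta-c_\theta u_\theta(T\cdot))$ up to a global constant that integrates to zero. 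The integrability of all $\log$-terms is immediate from continuity and compactness.
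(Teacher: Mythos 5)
Your argument is essentially the paper's: identify the scalar $t_\theta(x)=c_\theta(x)u_\theta(x)+d_\theta(x)$ from the second coordinate of the invariance relation $H_\theta(x)(u_\theta(x),1)=t_\theta(x)(u_\theta(Tx),1)$, rewrite it via the determinant identity $(c_\theta u_\theta+d_\theta)(a_\theta-c_\theta u_\theta(T\cdot))=\det H_\theta=\lambda\eta$, and recognise $\int\log(\lambda\eta)\,d\mu=\lambda^+(0)+\lambda^-(0)$. Two small remarks. First, your intermediate formula $\|H_\theta(x)|F_\theta(x)\|_{Tx,\theta}=|c_\theta u_\theta+d_\theta|\sqrt{1+u_\theta(Tx)^2}/\sqrt{1+u_\theta(x)^2}$ is not actually the adapted restriction norm: since $(u_\theta(x),1)$ has $\|\cdot\|_{x,\theta}$--norm $1$ and $(u_\theta(Tx),1)$ has $\|\cdot\|_{Tx,\theta}$--norm $1$, the adapted norm is simply $|c_\theta(x)u_\theta(x)+d_\theta(x)|$ with no $\sqrt{1+u^2}$ corrections; the extra factors you introduce happen to telescope under $T$--invariance, so the final integral is unharmed, but they are spurious. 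Second, your worry that $a_0(x)=\lambda(x)$ might fail to be positive is unfounded: $\eta(x)>0$ by construction and $\lambda(x)\eta(x)=\det H(x)=\det A(x)>0$ because $A$ takes values in $\mathbb{GL}^+(2,\mathbb{R})$, so $\lambda>0$ and the sign/branch discussion can be dropped, exactly as the paper does.
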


\begin{proof} For every $\theta\in[0,2\pi]$ and  every $x\in X$, define
$$r_\theta(x)=\|H_\theta(x)|{F_\theta(x)}\|_{Tx,\theta}.$$

Then, equation~(\ref{eqp:04}) implies that
\[
\lambda\sp+(\theta)=\int\log r_\theta(x)d\mu(x).
\]
Note that there exists $t_\theta(x)\in\mathbb R$ such that
\[
(HR_\theta)(x)(u_\theta(x),1)=t_\theta(x)(u_\theta(Tx),1)
\]
and hence
\[
\|(HR_\theta)(x)(u_\theta(x),1)\|_{Tx,\theta}=|t_\theta(x)|\cdot\|(u_\theta(Tx),1)\|_{Tx,\theta}
=|t_\theta(x)|=r_\theta(x).
\]
Writing

\begin{eqnarray}
(HR_\theta)(x)&=&\left(
                 \begin{array}{cc}
                   \lambda(x)\cos\theta & -\lambda(x)\sin\theta \\
                   \sigma(x)\cos\theta+\eta(x)\sin\theta & \eta(x)\cos\theta-\sigma(x)\sin\theta\\
                 \end{array}
               \right)\nonumber \\
&=&\left(
                 \begin{array}{cc}
                   a_\theta(x) & b_\theta(x) \\
                   c_\theta(x) & d_\theta(x) \\
                 \end{array}
               \right),\label{eq:relprinc}
\end{eqnarray}

we have the equations
\begin{equation}\label{eqp:06}
\begin{cases}
a_\theta(x)u_\theta(x)+b_\theta(x)&=\, \, t_\theta(x)u_\theta(Tx)\\
c_\theta(x)u_\theta(x)+d_\theta(x)&=\, \, t_\theta(x)
\end{cases}
\end{equation}
and multiplying the first equation by $-c_\theta(x)$ and the second one by $a_\theta(x)$ we
deduce that
\[
t_\theta(x)=\frac{\eta(x)\lambda(x)}{a_\theta(x)-c_\theta(x)u_\theta(Tx)}.
\]
Since $\lambda>0$ and $u_0=0$, it follows that $a_\theta(x)-c_\theta(x)u_\theta(Tx)>0$ for $\theta$ close to 0 and for each $x\in X$. Then, there exists $\varepsilon>0$ such that if $\theta\in(-\varepsilon, \varepsilon)$ then $t_\theta(x)>0$. We conclude

\begin{eqnarray*}
\lambda\sp+(\theta)&=&\int\log t_\theta(x)d\mu(x)\\
&=&\int\log\eta(x)d\mu(x)+\int\log\lambda(x)d\mu(x)\\
& & -\int_X\log \left(a_\theta(x)-c_\theta(x)u_\theta(Tx)\right)d\mu(x).
\end{eqnarray*}

Recalling that
\[
\lambda\sp+(0)=\int\log\|H(x)|_{F(x)}\|d\mu(x)=\int\log\eta(x) d\mu(x)
\]
{and}
\[
\lambda\sp+(0)+\lambda\sp-(0)=\lim_{n\to\infty}\frac{1}{n}\log|\det(H\sp n(x))|=\int\log(\lambda(x)\eta(x))d\mu(x)
\]
for $\mu$--a.e. $x\in X$, we obtain

$$\lambda\sp+(\theta)=\lambda\sp+(0)+\lambda\sp-(0)-\int_X\log
\left(a_\theta(x)-c_\theta(x)u_\theta(Tx)\right)d\mu(x).$$
\end{proof}


\subsection{Analyticity of the Lyapunov exponent.}\label{ssp:holm_fam} This subsection is devoted to explain the main tools to prove that the map $\theta\mapsto u_\theta(x)$ is real analytic.

For the real matrix
\[
A=\left(\begin{array}{cc}
                         a & b \\
                         c & d \\
                       \end{array}
                     \right),
\]
we denote the action of $A$ in the Riemann $\overline{\mathbb C}$ by a M\"{o}bius transformation by
\[
A\cdot z=\frac{az+b}{cz+d},\quad z\in \overline{\mathbb C}.
\]

\begin{lema}\label{lmp:01}
A linear cocycle $A$ has dominated splitting if and only if there exist a family of open disks $\mathcal D=\{\mathbb D(x)\}_{x\in X}$ with $\mathbb D(x)\subset \overline{\mathbb C}$ and $N\ge0$ such that \begin{equation}\label{eq:01}
\overline{A\sp n(x)\cdot\mathbb D(x)}\subset\mathbb D(T\sp nx)
\end{equation}
for each $x\in X$ and $n\ge N$.
\end{lema}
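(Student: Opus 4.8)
The plan is to prove the two implications of Lemma~\ref{lmp:01} by translating the defining inequality of dominated splitting into a statement about how the induced M\"obius dynamics on $\overline{\mathbb C}$ contracts a suitable family of disks. The bridge between the two pictures is the classical identification of directions in $\mathbb R^2$ (lines through the origin) with points of $\mathbb{RP}^1 \cong \partial \mathbb D \subset \overline{\mathbb C}$ via $(x_1,x_2) \mapsto x_1/x_2$, together with the fact that the projective action of $A$ extends holomorphically to $\overline{\mathbb C}$ as $z \mapsto A\cdot z$. I will use throughout that the image of a round disk under a M\"obius transformation is again a round disk (or half-plane, which I will treat as a degenerate disk), and that $A\sp n(x)\cdot{}$ is the M\"obius map obtained by composing the one-step maps along the orbit.

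First I would establish the forward implication: suppose $A$ has dominated splitting $\mathbb R^2 = E(x)\oplus F(x)$. Let $e(x), f(x) \in \overline{\mathbb C}$ be the points on $\partial\mathbb D$ corresponding to $E(x)$ and $F(x)$; by the invariance $A(x)E(x)=E(Tx)$, $A(x)F(x)=F(Tx)$ these are fixed-point data for the cocycle dynamics. The key point is that in projective coordinates adapted so that $F$ is attracting and $E$ repelling, the domination inequality $\|A\sp l(x)|E(x)\|\cdot\|A\sp{-l}(T\sp lx)|F(T\sp lx)\| < 1/2$ says exactly that the derivative of the M\"obius map $A\sp l(x)\cdot{}$ at the attracting fixed point $f(x)$ has modulus $<1/2$ (this derivative in the spherical or any fixed chart is, up to the angle between $E$ and $F$ which is bounded away from $0$ by continuity and compactness, the ratio of the two norms above). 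Hence I can choose a round disk $\mathbb D(x)$ centered near $f(x)$, of radius depending continuously on $x$ and uniformly bounded below, such that $\overline{A\sp l(x)\cdot \mathbb D(x)} \subset \mathbb D(T\sp l x)$; iterating the block of length $l$ and absorbing the finitely many intermediate steps by slightly enlarging/shrinking the disks gives \eqref{eq:01} for all $n \ge N$ with $N$ a suitable multiple of $l$. Compactness of $X$ and continuity of $x \mapsto (E(x),F(x))$ are what make the choice of the family $\mathcal D$ uniform.

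For the converse, suppose such a family $\mathcal D = \{\mathbb D(x)\}$ and $N$ exist. Define $F(x) := \bigcap_{n\ge 0} A\sp n(T\sp{-n}x)\cdot \overline{\mathbb D(T\sp{-n}x)}$; the nesting property \eqref{eq:01} makes this a decreasing intersection of nonempty compact sets, and the strict inclusion forces it to be a single point of $\partial\mathbb D$ — hence a line $F(x)$ — which is automatically $A$-invariant and, by a normal-families/uniform-contraction argument, depends continuously on $x$. Similarly, applying the hypothesis to the inverse cocycle (whose M\"obius action is the inverse M\"obius map, and for which the complementary disks $\overline{\mathbb C}\setminus \overline{\mathbb D(x)}$ play the analogous contracting role) produces the complementary invariant line field $E(x)$, transverse to $F(x)$ with angle bounded below. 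Finally, the uniform contraction of diameters of the images $A\sp n(x)\cdot\overline{\mathbb D(x)}$ — which follows from \eqref{eq:01} plus compactness, giving a definite fraction of contraction every $N$ steps — translates back, via the inverse of the coordinate dictionary and the formula relating the M\"obius derivative to $\|A\sp n|E\| / \|A\sp n|F\|$, into the exponential estimate $\|A\sp n(x)|E(x)\| / \|A\sp n(x)|F(x)\| \to 0$ uniformly, which is equivalent to condition (iii) for some $l$.

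The main obstacle I expect is the bookkeeping in passing between the three equivalent quantities: the ratio of norms $\|A\sp n|E\|/\|A\sp n|F\|$, the derivative of the M\"obius map at its attracting fixed point, and the diameter (in a fixed chart) of the image disk. Each pair differs by a factor controlled by the angle $\angle(E(x),F(x))$ and by the position of the disk on the sphere, and one must check these factors are uniformly bounded above and below — this is where compactness of $X$ and the continuity furnished by domination are used essentially, and it is the step most prone to hidden constants. A secondary technical point is the correct treatment of disks that degenerate to half-planes (equivalently, of $\infty \in \overline{\mathbb C}$), which is handled cleanly by working with the spherical metric rather than a single affine chart.
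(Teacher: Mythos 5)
Your proposal is correct in substance and follows the same conceptual bridge as the paper (directions in $\mathbb{R}^2$ $\leftrightarrow$ points of $\overline{\mathbb C}$ under the M\"obius action), but in the converse direction you take a genuinely different route. The paper reduces to showing that projective hyperbolicity of the induced action on $X\times\overline{\mathbb C}$ is equivalent to domination, then asserts (citing the authors' companion paper and a uniqueness argument for contractive directions) that it suffices to exhibit a \emph{single} contractive section $\tau$, and produces $\tau$ from the nesting \eqref{eq:01} plus the Schwarz Lemma. You instead construct \emph{both} invariant line fields: $F(x)$ as the nested intersection $\bigcap_n A^n(T^{-n}x)\cdot\overline{\mathbb D(T^{-n}x)}$, and $E(x)$ by the parallel nesting satisfied by the complementary disks under the inverse cocycle (a correct observation: taking complements in $\overline{\mathbb C}$ of \eqref{eq:01} gives $\overline{A^{-n}(T^nx)\cdot\bigl(\overline{\mathbb C}\setminus\overline{\mathbb D(T^nx)}\bigr)}\subset \overline{\mathbb C}\setminus\overline{\mathbb D(x)}$). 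Your route is longer but more self-contained, since it bypasses the nontrivial ``one contractive section suffices'' claim that the paper states without proof; what it costs you is the extra bookkeeping you flag at the end (passing uniformly between the norm ratio $\|A^n|E\|/\|A^n|F\|$, the spherical M\"obius derivative, and disk diameters, with constants controlled by $\angle(E,F)$ and compactness of $X$). In the forward direction the paper simply invokes ``a classical argument of contraction for family of cones,'' whereas you make the disk construction explicit by reading the domination inequality as a bound on the M\"obius derivative at the attracting fixed point $f(x)$; this is also correct modulo the same angle-controlled conversion factors. One point to nail down if you flesh this out: \eqref{eq:01} is only assumed for $n\ge N$, so your decreasing intersection defining $F(x)$ should be taken along the subsequence $n\in N\mathbb N$ (or after adjusting the disks so nesting holds from step one), and you should verify that compactness of $X$ together with the \emph{strict} inclusion in \eqref{eq:01} does yield a uniform Schwarz-type contraction factor, since the lemma statement itself imposes no explicit uniform gap on the nesting.
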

\proof The reader can find the complete proof of this Lemma  in \cite{pancho2}. Nevertheless, we explain the main steps by completeness.

For the sufficient direction, a classical argument of contraction for family of cones, gives us the existence of the family of disks.

For the reciprocal, first we observe that the linear cocycle $A$ over $X\times \mathbb C\sp2$ has dominated splitting if and only if the projective action $A(x)\cdot[v]=[A(x)v]$ is hyperbolic on $X\times\overline{\mathbb C}$. More precisely, there exist sections $\tau, \sigma:X\to \overline{\mathbb C}$ and constants $C>0$ and $0<\lambda<1$ such that
\begin{enumerate}
\item $A(x)\cdot\tau(x)=\tau(Tx)$ and $A(x)\cdot\sigma(x)=\sigma(Tx)$,
\item $\|(A\sp n(x))\sp\prime\cdot \tau(x)\|\le C\lambda\sp n$ for each $n\ge0$, and
\item $\|(A\sp{-n}(x))\sp\prime\cdot\sigma(x)\|\le C\lambda\sp n$ for each $n\ge0$,

\end{enumerate}
where the norm  is provided by the spherical metric. Moreover, we can show that in order to obtain dominated splitting it is enough to exhibit a contractive or an expansive section (the $\tau$ or $\sigma$ section respectively).

Actually, it is sufficient the existence of $\tau_x\in \overline{\mathbb C}$ such that

\begin{equation}\label{eq:hypdisc}
\|(A\sp n(x))\sp\prime\cdot\tau_x\|\le C\lambda\sp n.
\end{equation}

This follows from the fact that a contractive direction must be unique, and therefore the correspondence $x\mapsto\tau_x$ is continuous. Finally, we observe that the condition of contraction of disks and Schwartz Lemma give us the existence of  $\tau_x\in\mathbb D(x)$ for each $x\in X$ such that $\tau_x$ satisfies \eqref{eq:hypdisc}.

\endproof

Let $A$ be a linear cocycle with dominated splitting $E\oplus F=\mathbb R\sp2$. For $F(x)={\rm span}(v)$ with $v=(v_1,v_2)\in\mathbb R\sp2$ and $v_2\neq2$, we define $\xi(x)=v_1/v_2$ and for $F(x)={\rm span}((1,0))$ define $\xi(x)=\infty$. Therefore
\[
\xi(x)=\bigcap_{n\ge0}\overline{A\sp n(T\sp{-n}x)\cdot\mathbb  D(T\sp{-n}x)}.
\]
Moreover, the function $\xi:X\to\overline{\mathbb C}$ is continuous and $\xi(x)\in\mathbb  D(x)$.

Let $\Lambda\subset\mathbb C$ be an open connected set. We say that the family of cocycles $\{A_\lambda\}_{\lambda\in \Lambda}$ is {\em holomorphic} if  for every $x\in X$ and $\lambda \in\Lambda$
\[
A_\lambda(x)=\left(
  \begin{array}{cc}
    a_{11}(x,\lambda) & a_{12}(x,\lambda) \\
    a_{21}(x,\lambda) & a_{22}(x,\lambda) \\
  \end{array}
\right),
\]
and the functions $\lambda\mapsto a_{ij}(x,\lambda)$ for $i, j=1,2$ are holomorphic for every $x\in X$.

The family $\{A_\lambda\}_{\lambda\in \Lambda}$ has dominated splitting if for each $\lambda\in\Lambda$, $A_\lambda$ has dominated splitting. In this setting, we consider the correspondence $\lambda\mapsto\xi_\lambda(x)$ as before.
\begin{prop}\label{lmp:02}
If the family $\{A_\lambda\}_{\lambda\in \Lambda}$ has dominated splitting, then for each $x\in X$ the map $\lambda\mapsto\xi_\lambda(x)$  is holomorphic.
\end{prop}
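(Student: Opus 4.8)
The plan is to exhibit $\lambda\mapsto\xi_\lambda(x)$, for fixed $x\in X$, as a locally uniform limit of explicit holomorphic $\overline{\mathbb C}$-valued functions, and then invoke the Weierstrass convergence theorem. Fix $\lambda_0\in\Lambda$. The starting point --- and the one genuinely substantive step --- is to upgrade Lemma~\ref{lmp:01} to a form that is \emph{uniform} in $\lambda$ near $\lambda_0$: since having dominated splitting is an open condition (the finite-time inequality in the definition, equivalently the contraction of disks \eqref{eq:01}, persists under $C^0$-small perturbations of the cocycle), there exist a relatively compact neighborhood $\Lambda'\subset\Lambda$ of $\lambda_0$, a single family of open disks $\mathcal D=\{\mathbb D(y)\}_{y\in X}$ in $\overline{\mathbb C}$, an integer $N\ge0$, and constants $C>0$, $0<\rho<1$, such that for every $\lambda\in\overline{\Lambda'}$, every $y\in X$ and every $n\ge N$,
\[
\overline{A_\lambda^{n}(y)\cdot\mathbb D(y)}\subset\mathbb D(T^{n}y),\qquad
\operatorname{diam}_{\mathrm{sph}}\!\bigl(A_\lambda^{n}(y)\cdot\mathbb D(y)\bigr)\le C\rho^{n},
\]
where $\operatorname{diam}_{\mathrm{sph}}$ is the diameter in the spherical metric. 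The existence of such disks with a geometric contraction rate is precisely the quantitative content of Lemma~\ref{lmp:01} (compare the estimate \eqref{eq:hypdisc}); uniformity over $\lambda\in\overline{\Lambda'}$ follows from the openness just mentioned together with compactness of $\overline{\Lambda'}$, and is where I would appeal to \cite{pancho2}. Note that, since $\mathcal D$ satisfies \eqref{eq:01} with respect to each $A_\lambda$, $\lambda\in\Lambda'$, the section $\xi_\lambda$ is recovered from this same family as $\xi_\lambda(x)=\bigcap_{m\ge0}\overline{A_\lambda^{m}(T^{-m}x)\cdot\mathbb D(T^{-m}x)}$.

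Next, with $x$ fixed, for each $n\ge N$ I would choose a reference point $c_n\in\mathbb D(T^{-n}x)$ (for instance the center), which does not depend on $\lambda$, and set
\[
f_n(\lambda):=A_\lambda^{n}(T^{-n}x)\cdot c_n\in\overline{\mathbb C},\qquad\lambda\in\Lambda'.
\]
Each $f_n$ is holomorphic on $\Lambda'$: the entries of $A_\lambda(y)$ depend holomorphically on $\lambda$, hence so do those of the finite product $A_\lambda^{n}(T^{-n}x)\in\mathbb{GL}(2,\mathbb C)$, and the M\"{o}bius action of an invertible matrix with holomorphic entries is a holomorphic map into $\overline{\mathbb C}$. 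By construction $f_n(\lambda)\in A_\lambda^{n}(T^{-n}x)\cdot\mathbb D(T^{-n}x)$, while $\xi_\lambda(x)\in\overline{A_\lambda^{n}(T^{-n}x)\cdot\mathbb D(T^{-n}x)}$ by the displayed formula for $\xi_\lambda$. Hence
\[
d_{\mathrm{sph}}\bigl(f_n(\lambda),\xi_\lambda(x)\bigr)\le\operatorname{diam}_{\mathrm{sph}}\!\bigl(A_\lambda^{n}(T^{-n}x)\cdot\mathbb D(T^{-n}x)\bigr)\le C\rho^{n},
\]
so $f_n\to\bigl(\lambda\mapsto\xi_\lambda(x)\bigr)$ \emph{uniformly} on $\Lambda'$.

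Finally, a uniform limit of holomorphic maps into $\overline{\mathbb C}$ is holomorphic, and holomorphy is a local property, so it suffices to verify it near an arbitrary $\lambda_1\in\Lambda'$. If $\xi_{\lambda_1}(x)\ne\infty$, then by uniform convergence the tail of $(f_n)$ takes values in a fixed Euclidean disk about $\xi_{\lambda_1}(x)$ on a neighborhood of $\lambda_1$, and the classical Weierstrass theorem for $\mathbb C$-valued functions yields holomorphy of $\lambda\mapsto\xi_\lambda(x)$ there. If instead $\xi_{\lambda_1}(x)=\infty$, one runs the same argument for the family obtained by conjugating each $A_\lambda$ with the coordinate transposition, which is again a holomorphic family with dominated splitting and whose $F$-direction at $x$ equals $1/\xi_\lambda(x)$, a value distinct from $\infty$ at $\lambda_1$; this gives holomorphy of $\lambda\mapsto1/\xi_\lambda(x)$, hence of $\lambda\mapsto\xi_\lambda(x)$, near $\lambda_1$. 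Since $\lambda_0\in\Lambda$ was arbitrary, $\lambda\mapsto\xi_\lambda(x)$ is holomorphic on all of $\Lambda$. I expect the only non-formal ingredient to be the first paragraph --- extracting a single family of disks with a geometric contraction rate valid uniformly over a neighborhood of $\lambda_0$ --- after which everything reduces to routine complex analysis.
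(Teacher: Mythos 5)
Your proof is correct and shares the same basic framework as the paper's: both fix a base parameter $\lambda_0$, use openness of domination to obtain a single family of disks $\mathcal D$ that works for every $\lambda$ in a neighborhood, define holomorphic maps $f_n(\lambda)=A_\lambda^n(T^{-n}x)\cdot(\text{reference point in }\mathbb D(T^{-n}x))$, and then pass to the limit $\xi_\lambda(x)$ using the contraction of disks. Where you diverge is in the finishing step. The paper only observes that the $f_n$ map $D(\lambda_0)$ into the fixed disk $\mathbb D(x)$, invokes Montel's theorem to get normality, and then identifies every subsequential limit with $\xi_\lambda(x)$ via the shrinking of $A_\lambda^n(T^{-n}x)\cdot\mathbb D(T^{-n}x)$ (and, implicitly, that pointwise identification forces full convergence, not just subsequential). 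You instead make the contraction quantitative — geometric decay $C\rho^n$ of the spherical diameter, uniform over a compact neighborhood of $\lambda_0$ — which yields uniform convergence directly without Montel, and then apply Weierstrass. Your route buys a more explicit and self-contained argument (and you also handle the value $\infty$ carefully by conjugating with the coordinate transposition, a point the paper glosses over when asserting that a uniform limit of $\overline{\mathbb C}$-valued holomorphic maps is holomorphic); the cost is that you must extract the geometric rate, which is not literally in the statement of Lemma~\ref{lmp:01} but is indeed part of its quantitative content (the estimate \eqref{eq:hypdisc} together with a Schwarz-Pick / hyperbolic-metric contraction argument, and its uniformity over $\lambda$ near $\lambda_0$), as you correctly flag by deferring to \cite{pancho2}. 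The paper's Montel route demands less (only locally uniform boundedness) but is slightly less transparent in the identification of the limit; either way, the genuinely substantive input is the same: a single contracting family of disks valid for a whole neighborhood of parameters.
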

\begin{proof}
The proof follows from Montel's theorem. We recall the basic fact.

A family $\mathcal F$ of holomorphic functions defined over a fixed domain $\Lambda\subset\mathbb C$ is said to be normal if every sequence of members of $\mathcal F$ has a subsequence that converges uniformly on compact subsets of $\Lambda$. We recall that if a sequence of holomorphic functions converge uniformly on compact sets, then the limit function is also holomorphic. Finally, Montel theorem assert that a family $\mathcal F=\{f:\Lambda\to \mathbb D\}$ of holomorphic functions is normal.

Fix $\lambda\in\Lambda$. We recall that if $\mathcal D$ is the family of disks given by Lemma~\ref{lmp:01} for the cocycle $A_\lambda$, then the equation~(\ref{eq:01}) hold for $A_\zeta$ for $\zeta\in D(\lambda)$ in an small disk around $\lambda$, with the same family of disks.

Fix $x\in X$. Note that the function $f_n(\zeta)=A_\zeta\sp n(T\sp{-n}x)\cdot\xi_\lambda(T\sp{-n}x)$ for $n\in\mathbb N$ is holomorphic. From equation~(\ref{eq:01}) we have that
\[
\overline{A\sp n(T\sp{-n}x)\cdot\mathbb  D(T\sp{-n}x)}\subset\mathbb  D(x)
\]
and since that $\xi_\lambda(T\sp{-n}x)\in\mathbb  D(T\sp{-n}x)$ we conclude that $f_n(D(\lambda))\subset\mathbb  D(x)$. It follows from Montel Theorem that $\{f_n:D(\lambda)\to \mathbb D(x)\, :\, n\in\mathbb N\}$ is a normal family. Finally, taking a subsequence if necessary, from equation~(\ref{eq:01}) we have that $f_n(\zeta)\to\xi_\zeta(x)$ uniformly on compact sets, as required.
\end{proof}

The next proposition summarize the proof of item (ii) and the first statement of item (ii) in Theorem~\ref{teo:part2}.

\begin{prop}\label{prop:anal} For each $x\in X$, the functions $(-\varepsilon,\varepsilon)\ni\theta\mapsto a_\theta(x)$, $ c_\theta(x)$, $u_\theta(x)$ and the function $\lambda\sp+:(-\varepsilon,\varepsilon)\to\mathbb R$ given by $\lambda\sp+(\theta):=\lambda\sp+(AR_\theta)$ are real analytic.
\end{prop}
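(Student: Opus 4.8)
The plan is to derive the real analyticity of $\theta\mapsto u_\theta(x)$ from Proposition~\ref{lmp:02}, and then to get analyticity of $a_\theta(x)$, $c_\theta(x)$ and finally of $\lambda^+(\theta)$ essentially for free. First I would set up a complexified family. Recall that $u_\theta(x)$ is determined by $F_\theta(x)=\mathrm{span}((u_\theta(x),1))$, i.e. in the Möbius/projective coordinate of Section~\ref{ssp:holm_fam} we have precisely $\xi_\theta(x)=u_\theta(x)$ (with the convention $\xi=v_1/v_2$). The rotation $R_\theta$ has entries $\cos\theta,\ \pm\sin\theta$, which extend to entire functions of a complex variable $\lambda$ via $\cos\lambda,\ \sin\lambda$. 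Hence the family $H_\lambda(x):=H(x)R_\lambda$ is a holomorphic family of cocycles over the strip $\Lambda=\{\lambda\in\mathbb C : |\mathrm{Re}\,\lambda|<\varepsilon',\ |\mathrm{Im}\,\lambda|<\delta\}$ in the sense defined in the excerpt, since each entry of $H_\lambda(x)$ is a fixed $\mathbb R$-linear combination of $\cos\lambda$ and $\sin\lambda$ with continuous-in-$x$ coefficients.

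Next I would check that, shrinking $\varepsilon'$ and $\delta$ if necessary, the whole family $\{H_\lambda\}_{\lambda\in\Lambda}$ has dominated splitting. For real $\theta\in(-\varepsilon,\varepsilon)$ this is the hypothesis fixed at the start of Subsection~\ref{ssp:imp_exp}; to propagate it to a complex neighbourhood I would invoke Lemma~\ref{lmp:01}: $H_0$ admits a family of disks $\mathcal D=\{\mathbb D(x)\}$ with $\overline{H^N(x)\cdot\mathbb D(x)}\subset\mathbb D(T^Nx)$, and since the finite composition $H^N_\lambda(x)$ depends continuously (indeed holomorphically) on $\lambda$ and the inclusions are open conditions on a compact base, the same disk family works for all $\lambda$ in a small bidisk around $0$. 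By the remark in the proof of Proposition~\ref{lmp:02} this is exactly what is needed. Then Proposition~\ref{lmp:02} applies and gives that $\lambda\mapsto\xi_\lambda(x)=u_\lambda(x)$ is holomorphic on $\Lambda$; restricting to the real axis, $\theta\mapsto u_\theta(x)$ is real analytic for each $x\in X$.

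The remaining functions are immediate. From the explicit formula~\eqref{eq:relprinc}, $a_\theta(x)=\lambda(x)\cos\theta$ and $c_\theta(x)=\sigma(x)\cos\theta+\eta(x)\sin\theta$ are real analytic in $\theta$ (they are even entire). For $\lambda^+(\theta)$ I would use Proposition~\ref{prop:TD01}: $\lambda^+(\theta)=\lambda^+(0)+\lambda^-(0)-\int_X\log\bigl(a_\theta(x)-c_\theta(x)u_\theta(Tx)\bigr)\,d\mu(x)$. The integrand $g(\theta,x):=\log\bigl(a_\theta(x)-c_\theta(x)u_\theta(Tx)\bigr)$ is, for each fixed $x$, a real analytic function of $\theta$ on $(-\varepsilon,\varepsilon)$ since $a_\theta(x)-c_\theta(x)u_\theta(Tx)>0$ there (this positivity is established in the proof of Proposition~\ref{prop:TD01}) and $\log$ is analytic on the positive reals; moreover one has uniform-in-$x$ holomorphic extension to a fixed complex neighbourhood of $(-\varepsilon,\varepsilon)$ with a uniform bound, using compactness of $X$ and continuity in $x$. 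Differentiating under the integral sign (justified by this uniform bound, which lets us estimate all $\theta$-derivatives uniformly via Cauchy estimates) shows $\theta\mapsto\int_X g(\theta,x)\,d\mu(x)$ is real analytic, hence so is $\lambda^+$.

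The one genuine obstacle is the propagation of dominated splitting to a complex neighbourhood of the parameter interval, i.e. making precise the claim that Lemma~\ref{lmp:01}'s disk family survives under complexification of $\theta$; one must be careful that the Möbius action of the complex matrix $H_\lambda(x)$ on $\overline{\mathbb C}$ still maps the chosen disks strictly inside each other — this is where continuity of $\lambda\mapsto H^N_\lambda(x)$ together with compactness of $X$ and the strictness of the original inclusions does the work. Everything after that (analyticity of $a_\theta,c_\theta$, the log, and differentiation under the integral) is routine.
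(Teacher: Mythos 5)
Your proposal is correct and follows essentially the same route as the paper: complexify the rotation to get a holomorphic family, propagate dominated splitting to a complex neighbourhood of the real parameter via the disk family of Lemma~\ref{lmp:01}, apply Proposition~\ref{lmp:02} to get holomorphy of $\xi_\lambda(x)=u_\lambda(x)$, and then read off analyticity of $a_\theta, c_\theta$ and of $\lambda^+$ from \eqref{eq:relprinc} and \eqref{eqp:05}. The only cosmetic difference is the choice of complex parameter — you extend $\theta$ directly to $\lambda\in\mathbb C$ through $\cos\lambda,\sin\lambda$, whereas the paper uses the conformally equivalent parametrization $z\mapsto S_z$ with $z$ in a disk around $1$; and you spell out the uniform Cauchy-estimate argument for differentiating under the integral, which the paper leaves as ``immediate''.
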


\proof Since $a_\theta(x)=\lambda(x)\cos\theta$ and $c_\theta(x)=\sigma(x)\cos\theta+\eta(x)\sin\theta$, it only remains to prove this fact for the function $u_\theta(x)$.

Let $\varepsilon>0$ as in Proposition~\ref{prop:TD01} and let $U(1,r)=\{z\in\mathbb C\,:\, |1-z|<r<1\}$ such that $\mathbb S\sp1\cap U(1,r)\subset\{e\sp{i\theta}\,:\, |\theta|<\varepsilon\}$. For each $z\in U(1,r)$, define
\[
S_z=\left(
      \begin{array}{cc}
        \frac{z+z\sp{-1}}{2} & -\frac{z-z\sp{-1}}{2} \\
        \frac{z-z\sp{-1}}{2} & \frac{z+z\sp{-1}}{2} \\
      \end{array}
    \right).
\]
We have that $S_{e\sp{i\theta}}=R_\theta$. Define $ H_z=(H_z)_T$ where
$H_z(x)=H(x)S_z$. Then for $0<r\ll1$ the family $\mathcal F=\{H_z\, :\, z\in U(1,r)\}$ has dominated splitting and therefore, for each $x\in X$ the map $z\mapsto\xi_z(x)$ is holomorphic. Our assertion it follows from the fact $\xi_{e\sp{i\theta}}(x)=u_\theta(x)$. The assertion of analyticity corresponding to the function $\lambda\sp+(\theta):=\lambda\sp+(AR_\theta)$ is immediate from \eqref{eqp:05}.

\endproof
\subsection{Calculating the derivatives of $\lambda\sp+(\theta)$}\label{ssec:calculo}

From Proposition~\ref{prop:anal} we know that the function $\lambda\sp+:(-\varepsilon,\varepsilon)\to\mathbb R$ defined by $\lambda\sp+(\theta):=\lambda\sp+(AR_\theta)$ is real analytic. Now we proceed to calculate the derivatives of the Lyapunov exponent $\lambda\sp+(\theta)$ in $\theta=0$ and to study the concavity finishing the proof of item (iii) in Theorem~\ref{teo:part2}.

\begin{lema}\label{le:cl001}
For every $x\in X$ we have
\begin{equation}\label{eq:pderl}
\frac{d\lambda\sp+(0)}{d\theta}=
\int\frac{\sigma(x)}{\lambda(x)}\dot u_0(Tx)d\mu(x)
\end{equation}

and
\begin{eqnarray}\label{eq:sderl}
\frac{d\sp2\lambda\sp+(0)}{d\theta\sp2}&=&\int\left[\frac{2\eta(x)\dot u_0(Tx)}{\lambda(x)}+1+\frac{\sigma(x)\ddot u_0(Tx)}{\lambda(x)}\right.\\ \nonumber
& &+\left.\left(\frac{\sigma(x)\dot u_0(Tx)}{\lambda(x)}\right)\sp2\right]d\mu(x).
\end{eqnarray}
\end{lema}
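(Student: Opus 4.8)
The plan is to differentiate the closed formula \eqref{eqp:05} for $\lambda^+(\theta)$ twice, interchanging $d/d\theta$ with $\int_X(\cdot)\,d\mu$, and then to specialize to $\theta=0$. Set $\Phi_\theta(x):=a_\theta(x)-c_\theta(x)u_\theta(Tx)$, so that Proposition~\ref{prop:TD01} reads $\lambda^+(\theta)=\lambda^+(0)+\lambda^-(0)-\int_X\log\Phi_\theta(x)\,d\mu(x)$ for $|\theta|<\varepsilon$. From \eqref{eq:relprinc} one has the explicit expressions $a_\theta(x)=\lambda(x)\cos\theta$ and $c_\theta(x)=\sigma(x)\cos\theta+\eta(x)\sin\theta$, and by construction $u_0\equiv0$; hence at $\theta=0$ one gets $a_0=\lambda$, $\partial_\theta a_0=0$, $\partial_\theta^2 a_0=-\lambda$, $c_0=\sigma$, $\partial_\theta c_0=\eta$, $\partial_\theta^2 c_0=-\sigma$, and $\Phi_0=\lambda>0$.

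Before differentiating under the integral I would check the dominated-convergence criterion for differentiating a parameter integral: for each $x$ the derivatives $\partial_\theta^k\log\Phi_\theta(x)$, $k=1,2$, must exist on a neighborhood of $0$ and be bounded there by a fixed $\mu$-integrable (here, continuous on compact $X$, hence bounded) function of $x$. Analyticity of $\theta\mapsto u_\theta(x)$, and thus of $\theta\mapsto\Phi_\theta(x)$, for each fixed $x$ is exactly Proposition~\ref{prop:anal}. The joint continuity in $(\theta,x)$ of $u_\theta(x)$ and of its $\theta$-derivatives I would extract from the holomorphic family $\{H_z:z\in U(1,r)\}$ used there: since $\xi_z(x)=u_\theta(x)$ for $z=e^{i\theta}$, the correspondence $(z,x)\mapsto\xi_z(x)$ is jointly continuous and holomorphic in $z$ on $U(1,r)$, so Cauchy's integral formula on a circle inside $U(1,r)$ writes $\partial_\theta^k u_0(x)$ as a contour integral of a jointly continuous integrand. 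This yields continuity of $x\mapsto\partial_\theta^k u_0(x)$ on $X$, hence boundedness, and uniform bounds for $\theta$ in a smaller neighborhood of $0$. Since moreover $\Phi_\theta(x)$ stays close to $\lambda(x)\ge\min_X\lambda>0$ for $\theta$ small, $\log\Phi_\theta$ is smooth with all $\theta$-derivatives controlled uniformly in $x$, so differentiation under $\int_X$ is licensed.

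It then remains to compute. Writing a dot for $\partial/\partial\theta$, one has $\partial_\theta\log\Phi_\theta=\dot\Phi_\theta/\Phi_\theta$ and $\partial_\theta^2\log\Phi_\theta=\ddot\Phi_\theta/\Phi_\theta-(\dot\Phi_\theta/\Phi_\theta)^2$, while the product rule applied to $\Phi_\theta$ gives $\dot\Phi_\theta=\dot a_\theta-\dot c_\theta\,u_\theta(Tx)-c_\theta\,\dot u_\theta(Tx)$ and $\ddot\Phi_\theta=\ddot a_\theta-\ddot c_\theta\,u_\theta(Tx)-2\dot c_\theta\,\dot u_\theta(Tx)-c_\theta\,\ddot u_\theta(Tx)$. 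Substituting the values at $\theta=0$ listed above and $u_0(Tx)=0$ gives $\dot\Phi_0(x)=-\sigma(x)\dot u_0(Tx)$ and $\ddot\Phi_0(x)=-\lambda(x)-2\eta(x)\dot u_0(Tx)-\sigma(x)\ddot u_0(Tx)$. Dividing by $\Phi_0(x)=\lambda(x)$, inserting into $d\lambda^+(0)/d\theta=-\int_X\partial_\theta\log\Phi_0\,d\mu$ and $d^2\lambda^+(0)/d\theta^2=-\int_X\partial_\theta^2\log\Phi_0\,d\mu$, and simplifying, produces precisely \eqref{eq:pderl} and \eqref{eq:sderl}.

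The one step that genuinely needs care — and the one I would write out in full — is the passage from the pointwise-in-$x$ analyticity of Proposition~\ref{prop:anal} to uniform-in-$x$ control of the first two $\theta$-derivatives of $\log\Phi_\theta$, i.e.\ the Montel/Cauchy argument for joint regularity of $u_\theta$. Once differentiation under the integral is justified, the rest is elementary differentiation of the trigonometric coefficients $a_\theta,c_\theta$ at the origin.
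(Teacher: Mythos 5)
Your proposal is correct and follows essentially the same route as the paper: differentiate the closed formula of Proposition~\ref{prop:TD01} under the integral, substitute $a_\theta(x)=\lambda(x)\cos\theta$, $c_\theta(x)=\sigma(x)\cos\theta+\eta(x)\sin\theta$, and evaluate at $\theta=0$ using $u_0\equiv 0$. The only difference is that you spell out the dominated-convergence/Cauchy-estimate argument justifying the interchange of $\partial_\theta$ with $\int_X$, which the paper leaves implicit after asserting analyticity in Proposition~\ref{prop:anal}.
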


\begin{proof}
From equation~(\ref{eqp:05}) it follows that $\lambda\sp+$ is differentiable and
\[
\begin{aligned}
\frac{d\lambda\sp+(\theta)}{d\theta}&=-\int\frac{d}{d\theta}
\log\left[a_\theta(x)-c_\theta(x)u_\theta(Tx)\right]
d\mu(x)\\
&=-\int\frac{\dot a_\theta(x)-\dot c_\theta(x) u_\theta(Tx)-c_\theta(x) \dot u_\theta(Tx)}{a_\theta(x)-c_\theta(x)u_\theta(Tx)}
d\mu(x)\\
&=\int\frac{\dot c_\theta(x) u_\theta(Tx)+c_\theta(x) \dot u_\theta(Tx)-\dot a_\theta(x)}{a_\theta(x)-c_\theta(x)u_\theta(Tx)}
d\mu(x).
\end{aligned}
\]
Since $u_0(x)=0$ for every $x\in X$, we conclude
\[
\frac{d\lambda\sp+(0)}{d\theta}=
\int\frac{c_0(x) \dot u_0(Tx)-\dot a_0(x)}{a_0(x)}
d\mu(x).
\]

 From \eqref{eq:relprinc} we have $a_\theta(x)=\lambda(x)\cos\theta$ and  $c_\theta(x)=\sigma(x)\cos\theta+\eta(x)\sin\theta$, for every $x\in X$. A simple calculation allow us to conclude that

\smallskip

\[
\frac{d\lambda\sp+(0)}{d\theta}=\int\frac{\sigma(x)}{\lambda(x)}\dot u_0(Tx)d\mu(x).
\]
Similarly, we can show that

\begin{eqnarray*}
\frac{d\sp2\lambda\sp+(\theta)}{d\theta\sp2}&=&
\int\left[\frac{\ddot c_\theta(x) u_\theta(Tx)+2\dot c_\theta(x)\dot u_\theta(Tx)+c_\theta(x)\ddot u_\theta(Tx)-\ddot a_\theta(x)}{a_\theta(x)-c_\theta(x)u_\theta(Tx)}\right.\\
& & +\left.\left(\frac{\dot c_\theta(x) u_\theta(Tx)+c_\theta(x) \dot u_\theta(Tx)-\dot a_\theta(x)}{a_\theta(x)-c_\theta(x)u_\theta(Tx)}\right)\sp2\right]d\mu(x)
\end{eqnarray*}

\noindent and therefore

\begin{eqnarray*}
\frac{d\sp2\lambda\sp+(0)}{d\theta\sp2}&=&
\int\left[\frac{2\dot c_0(x)\dot u_0(Tx)+c_0(x)\ddot u_0(Tx)-\ddot a_0(x)}{a_0(x)}\right.\\
& & \left.+\left(\frac{c_0(x) \dot u_0(Tx)-\dot a_0(x)}{a_0(x)}\right)\sp2\right]d\mu(x)\\
&=&\int\left[\frac{2\eta(x)\dot u_0(Tx)}{\lambda(x)}+1+\frac{\sigma(x)\ddot u_0(Tx)}{\lambda(x)}+\left(\frac{\sigma(x)\dot u_0(Tx)}{\lambda(x)}\right)\sp2\right]d\mu(x).
\end{eqnarray*}

\end{proof}

\begin{lema}\label{prp:002}
For each $x\in X$, we have
\begin{equation}\label{eqp:12}
\dot u_0(x)=-\sum_{k=1}\sp \infty\frac{\lambda_k(T\sp{-k}x)}{\eta_k(T\sp{-k}x)}
\end{equation}
and
\begin{equation}\label{eq:sdu}
\ddot u_0(x)=-\sum_{k=1}\sp \infty\frac{\lambda_k(T\sp{-k}x)}{\eta_k(T\sp{-k}x)}\cdot\alpha(T\sp{-k}x)\cdot\sigma(T\sp{-k}x)
\end{equation}
where
\[
\alpha(x)=\frac{2\left(\dot u_0(x)-1\right)\sp2}{\eta(x)}
\]
and $\lambda_k$, $\eta_k$ are as in equation~(\ref{eqp:11}).
\end{lema}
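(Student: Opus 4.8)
The plan is to use the fact that the slope $u_\theta(x)$ of the invariant bundle $F_\theta(x)$ is a fixed point of the M\"obius action of $H_\theta(x)$, and to differentiate this functional equation in $\theta$ at $\theta=0$, where everything is explicit because $u_0\equiv 0$.

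First I would record the invariance relation: dividing the two equations in \eqref{eqp:06} (legitimate since $t_\theta(x)>0$ for $|\theta|<\varepsilon$) gives
\[
\bigl(c_\theta(x)u_\theta(x)+d_\theta(x)\bigr)\,u_\theta(Tx)=a_\theta(x)u_\theta(x)+b_\theta(x).
\]
By Proposition~\ref{prop:anal} the map $\theta\mapsto u_\theta(x)$ is real analytic, and by \eqref{eq:relprinc} we have $a_\theta=\lambda\cos\theta$, $b_\theta=-\lambda\sin\theta$, $c_\theta=\sigma\cos\theta+\eta\sin\theta$, $d_\theta=\eta\cos\theta-\sigma\sin\theta$, so at $\theta=0$: $a_0=\lambda$, $b_0=0$, $c_0=\sigma$, $d_0=\eta$, $\dot a_0=0$, $\dot b_0=-\lambda$, $\ddot a_0=-\lambda$, $\ddot b_0=0$, and $u_0\equiv 0$. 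Differentiating the displayed identity once at $\theta=0$, all terms carrying a factor $u_0$ disappear and one is left with $\eta(x)\,\dot u_0(Tx)=\lambda(x)\bigl(\dot u_0(x)-1\bigr)$, that is, the linear recursion
\[
\dot u_0(Tx)=\frac{\lambda(x)}{\eta(x)}\bigl(\dot u_0(x)-1\bigr).
\]
Iterating this backwards $n$ times yields
\[
\dot u_0(x)=\Bigl(\prod_{j=1}^{n}\frac{\lambda(T^{-j}x)}{\eta(T^{-j}x)}\Bigr)\dot u_0(T^{-n}x)-\sum_{k=1}^{n}\prod_{j=1}^{k}\frac{\lambda(T^{-j}x)}{\eta(T^{-j}x)}.
\]
By Lemma~\ref{le:qc004} and compactness of $X$ one has $\rho_0:=\sup_{x\in X}|\lambda(x)|/\eta(x)<1$, and since $\prod_{j=1}^{k}\lambda(T^{-j}x)=\lambda_k(T^{-k}x)$ and $\prod_{j=1}^{k}\eta(T^{-j}x)=\eta_k(T^{-k}x)$ (with $\lambda_k,\eta_k$ as in \eqref{eqp:11}), every partial product is bounded by $\rho_0^{k}$. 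As $\dot u_0$ is continuous on the compact space $X$, hence bounded, the leading term tends to $0$ and the series converges absolutely, giving \eqref{eqp:12}.

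For the second derivative I would differentiate the same invariance identity twice at $\theta=0$. Once more the terms containing the factor $u_0$ vanish; substituting $\dot u_0(Tx)=\tfrac{\lambda(x)}{\eta(x)}(\dot u_0(x)-1)$ and using $\dot a_0=0$, $\ddot b_0=0$ one obtains
\[
\eta(x)\,\ddot u_0(Tx)+\frac{2\lambda(x)\sigma(x)}{\eta(x)}\bigl(\dot u_0(x)-1\bigr)^2=\lambda(x)\,\ddot u_0(x),
\]
equivalently
\[
\ddot u_0(Tx)=\frac{\lambda(x)}{\eta(x)}\bigl(\ddot u_0(x)-\sigma(x)\alpha(x)\bigr),\qquad \alpha(x)=\frac{2(\dot u_0(x)-1)^2}{\eta(x)}.
\]
This is again a linear recursion of the shape $w(Tx)=\tfrac{\lambda(x)}{\eta(x)}w(x)+g(x)$, now with forcing term $g(x)=-\tfrac{\lambda(x)}{\eta(x)}\sigma(x)\alpha(x)$. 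Iterating backwards exactly as above, using $\rho_0<1$ together with the boundedness of $\ddot u_0$ (again continuous on $X$) to annihilate the leading term and to sum the tail, produces \eqref{eq:sdu}.

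The only genuine work is the second differentiation: one must carry the product and quotient rules carefully and recognize that, after plugging in the first-order relation for $\dot u_0(Tx)$, the forcing term collapses precisely into the combination $\sigma(x)\alpha(x)$ with $\alpha$ as defined in the statement; the convergence of both series is routine once domination is used in the uniform form $\rho_0<1$, and the continuity (hence boundedness) of $\dot u_0$ and $\ddot u_0$ follows from the holomorphic dependence of $z\mapsto\xi_z(x)$ established in Section~\ref{ssp:holm_fam} via Cauchy's estimates.
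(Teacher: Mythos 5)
Your proof is correct and follows the same route as the paper: both start from the invariance relation coming from~\eqref{eqp:06} (the paper writes it as the quotient~\eqref{eq:der1}, you write the equivalent product form), differentiate once and twice at $\theta=0$ using $u_0\equiv 0$ together with the explicit entries $a_\theta=\lambda\cos\theta$, $b_\theta=-\lambda\sin\theta$, $c_\theta=\sigma\cos\theta+\eta\sin\theta$, $d_\theta=\eta\cos\theta-\sigma\sin\theta$, arrive at exactly the paper's linear recursions~\eqref{eqp:09} and~\eqref{eqp:10}, and iterate backwards using the uniform bound $\lambda/\eta<\tau<1$ from Lemma~\ref{le:qc004} together with boundedness of $\dot u_0$, $\ddot u_0$ to sum the series. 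The only cosmetic difference is that you clear the denominator before differentiating rather than applying the quotient rule; the computations coincide.
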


\begin{proof} The relations in (\ref{eqp:06}) establish that, for every $\theta\in[0,2\pi]$ and  every $x\in X$,
\begin{equation}\label{eq:der1}
u_\theta(x)=\frac{a_\theta(T\sp{-1}x)u_\theta(T\sp{-1}x)+b_\theta(T\sp{-1}x)}
{c_\theta(T\sp{-1}x)u_\theta(T\sp{-1}x)+d_\theta(T\sp{-1}x)}.
\end{equation}
Regarding that from \eqref{eq:relprinc} we have $a_\theta(x)=\lambda(x)\cos\theta$, $b_\theta(x)=-\lambda(x)\sin\theta$, $c_\theta(x)=\sigma(x)\cos\theta+\eta(x)\sin\theta$ and $d_\theta(x)=\eta(x)\cos\theta-\sigma(x)\sin\theta$, define
\begin{eqnarray}\label{eq:f}
f_\theta(x)&=&a_\theta(T\sp{-1}x)u_\theta(T\sp{-1}x)+b_\theta(T\sp{-1}x)\nonumber\\
&=& \lambda(T\sp{-1}x)\cos\theta\cdot u_\theta(T\sp{-1}x)-\lambda(T\sp{-1}x)\sin\theta
\end{eqnarray}
and

\begin{eqnarray}\label{eq:g}
g_\theta(x)&=&c_\theta(T\sp{-1}x)u_\theta(T\sp{-1}x)+d_\theta(T\sp{-1}x)\nonumber\\
&=&\left(\sigma(T\sp{-1}x)\cos\theta+\eta(T\sp{-1}x)\sin\theta\right)u_\theta(T\sp{-1}x)\\
& &+\eta(T\sp{-1}x)\cos\theta-\sigma(T\sp{-1}x)\sin\theta\nonumber
\end{eqnarray}

So,  the derivatives of  both function with respect to $\theta$ are

\begin{eqnarray}\label{eq:derf}
\dot{f}_\theta(x)&=&-\lambda(T\sp{-1}x)u_\theta(T\sp{-1}x)\sin\theta \\ \nonumber
& & +\lambda(T\sp{-1}x)\dot u_\theta(T\sp{-1}x)\cos\theta -\lambda(T\sp{-1}x)\cos\theta,
\end{eqnarray}
and
\begin{eqnarray}\label{eq:derg}
\dot{g}_\theta(x)&=&-\sigma(T\sp{-1}x)u_\theta(T\sp{-1}x)\sin\theta+\eta(T\sp{-1}x)u_\theta(T\sp{-1}x)\cos\theta \nonumber \\ \nonumber
& & +(\sigma(T\sp{-1}x)\dot u_\theta((T\sp{-1}x)\cos\theta+\eta(T\sp{-1}x)\dot u_\theta((T\sp{-1}x)\sin\theta\\
& &-\eta(T\sp{-1}x)\sin\theta-\sigma(T\sp{-1}x)\cos\theta.
\end{eqnarray}

Since $u_\theta(x)=f_\theta(x)/g_\theta(x)$,  we can take the derivative with respect to $\theta$ using the expressions \eqref{eq:f},\eqref{eq:g}, \eqref{eq:derf},\eqref{eq:derg} above and, evaluating in $\theta=0$, we obtain

\begin{equation}\label{eqp:09}
\dot u_0(x)=\frac{\lambda(T\sp{-1}x)}{\eta(T\sp{-1}x)}\left(\dot u_0(T\sp{-1}x)-1\right).
\end{equation}

Using the recurrence given by the expression (\ref{eqp:09}) and taking (\ref{eqp:11}) into account, we obtain that for every integer $n\geq 1$

\begin{eqnarray*}
\dot u_0(x)&=&\left(\prod_{j=1}\sp n\frac{\lambda(T\sp{-j}x)}{\eta(T\sp{-j}x)}\right)\cdot\dot u_0(T\sp{-n}x)-\sum_{k=1}\sp n\prod_{j=1}\sp k\frac{\lambda(T\sp{-j}x)}{\eta(T\sp{-j}x)}\\
&=&\frac{\lambda_n(T\sp{-n}x)}{\eta_n(T\sp{-n}x)}\cdot\dot u_0(T\sp{-n}x)-\sum_{k=1}\sp n\frac{\lambda_k(T\sp{-k}x)}{\eta_k(T\sp{-k}x)}.
\end{eqnarray*}

Let $0<\tau<1$ such that $\lambda(x)/\eta(x)<\tau$ for all $x\in X$. Since $u_\theta$ is real analytic, then $\dot u_0(x)$ is bounded and we conclude
\[
\left|\frac{\lambda_n(T\sp{-n}x)}{\eta_n(T\sp{-n}x)}\cdot\dot u_0(T\sp{-n}x)\right| \le\tau\sp k|\dot u_0(T\sp{-n}x)|\to0
\]
when $n\to\infty$. Therefore,
\[
\dot u_0(x)=-\sum_{k=1}\sp \infty\frac{\lambda_k(T\sp{-k}x)}{\eta_k(T\sp{-k}x)}.
\]

Arguing in a similar fashion, we obtain that the second derivative of $u_\theta(x)$ with respect to $\theta$ evaluated in $\theta=0$ is given by
\begin{eqnarray}\label{eqp:10}
\nonumber\ddot u_0(x)&=&\frac{\lambda(T\sp{-1}x)}{\eta(T\sp{-1}x)}\left(\ddot u_0(T\sp{-1}x)-\frac{2\left(\dot u_0(T\sp{-1}x)-1\right)\sp2}{\eta(T\sp{-1}x)}\cdot\sigma(T\sp{-1}x)\right)\\
&=&\frac{\lambda(T\sp{-1}x)}{\eta(T\sp{-1}x)}\left(\ddot u_0(T\sp{-1}x)-\alpha(T\sp{-1}x)\cdot\sigma(T\sp{-1}x)\right).
\end{eqnarray}

Again, the recurrence in (\ref{eqp:10}) allows us to conclude that

\begin{eqnarray*}
\ddot u_0(x)&=&\frac{\lambda_n(T\sp{-n}x)}{\eta_n(T\sp{-n}x)}\cdot\ddot u_0(T\sp{-n}x)-\sum_{k=1}\sp n\frac{\lambda_k(T\sp{-k}x)}{\eta_k(T\sp{-k}x)}\cdot\alpha(T\sp{-k}x)\cdot\sigma(T\sp{-k}x)\\
&=&-\sum_{k=1}\sp \infty\frac{\lambda_k(T\sp{-k}x)}{\eta_k(T\sp{-k}x)}\cdot\alpha(T\sp{-k}x)\cdot\sigma(T\sp{-k}x)
\end{eqnarray*}
\end{proof}

Now, we can study the growth and concavity of $\lambda\sp+(\theta)$ in a neighbourhood of $\theta=0$.

\begin{lema}\label{le:maximo}
If $\sigma(x)=0$ for $\mu$--a.e. $x\in X$ then $\lambda\sp+(\theta)$ has a local maximum in $\theta=0$.
\end{lema}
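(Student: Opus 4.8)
The plan is to read off the first two derivatives of $\lambda^+$ at $\theta=0$ from Lemma~\ref{le:cl001}, after simplifying them with the hypothesis $\sigma=0$ $\mu$--a.e.\ and with the explicit series for $\dot u_0$ provided by Lemma~\ref{prp:002}.

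First I would pin down the sign of $\dot u_0$. Since the cocycle takes values in $\mathbb{GL}^+(2,\mathbb{R})$ and the matrices used in the quasi--conjugation of Proposition~\ref{prop:qc1} lie in $\mathbb{SO}(2,\mathbb{R})$, one has $\det H(x)=\det A(x)>0$, i.e.\ $\lambda(x)\eta(x)>0$; as $\eta(x)>0$ by construction this forces $\lambda(x)>0$ for every $x\in X$. Combined with Lemma~\ref{le:qc004} this gives $0<\lambda(x)<\eta(x)$, so every factor $\lambda(T^{-i}x)/\eta(T^{-i}x)$ lies in $(0,1)$ and formula~\eqref{eqp:12} shows that $\dot u_0(x)<0$ for all $x\in X$. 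Since $\dot u_0$ is also bounded (as noted in the proof of Lemma~\ref{prp:002}), it follows that $\int_X\dot u_0\,d\mu<0$.

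Next I would substitute $\sigma=0$ $\mu$--a.e.\ into Lemma~\ref{le:cl001}. Formula~\eqref{eq:pderl} gives $\frac{d\lambda^+(0)}{d\theta}=0$ at once, while in~\eqref{eq:sderl} the two summands carrying a factor $\sigma$ vanish $\mu$--a.e., leaving
\[
\frac{d^{2}\lambda^+(0)}{d\theta^{2}}=\int_X\left(\frac{2\eta(x)\,\dot u_0(Tx)}{\lambda(x)}+1\right)d\mu(x).
\]
Evaluating the recursion~\eqref{eqp:09} at $Tx$, namely $\dot u_0(Tx)=\frac{\lambda(x)}{\eta(x)}\left(\dot u_0(x)-1\right)$, the integrand collapses to $2\dot u_0(x)-1$, so
\[
\frac{d^{2}\lambda^+(0)}{d\theta^{2}}=2\int_X\dot u_0(x)\,d\mu(x)-1<-1<0 .
\]
Since $\lambda^+$ is real analytic near $0$ by Proposition~\ref{prop:anal}, the vanishing of its first derivative together with the strict negativity of its second derivative at $\theta=0$ shows that $\theta=0$ is a (strict) local maximum of $\theta\mapsto\lambda^+(\theta)$.

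I expect the only genuinely non--routine step to be the first one: observing that orientation preservation forces $\lambda>0$ (not merely $|\lambda|<\eta$), which is precisely what fixes the sign of $\dot u_0$, and then the algebraic simplification of the integrand via~\eqref{eqp:09}. Without that last step the crude estimate $2\eta(x)\dot u_0(Tx)/\lambda(x)<0$ only yields $\frac{d^{2}\lambda^+(0)}{d\theta^{2}}<1$, which is not enough.
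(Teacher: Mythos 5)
Your proof is correct and follows essentially the same route as the paper: with $\sigma\equiv0$ the first derivative in \eqref{eq:pderl} vanishes, the $\sigma$--terms in \eqref{eq:sderl} drop out, and the remaining integrand is forced strictly below $-1$ by the formula for $\dot u_0$. You simplify the integrand to $2\dot u_0(x)-1$ via the recursion \eqref{eqp:09} and then invoke $\dot u_0<0$, while the paper instead reads off $-\dot u_0(Tx)>\lambda(x)/\eta(x)$ pointwise from the series \eqref{eqp:12}; these are the same estimate in slightly different clothes.
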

\begin{proof} From \eqref{eq:pderl}, we have that
\[
\frac{d\lambda\sp+(0)}{d\theta}=0
\]
and from \eqref{eq:sderl} we have
\[
\frac{d\sp2\lambda\sp+(0)}{d\theta\sp2}=\int\left[\frac{2\eta(x)\dot u_0(Tx)}{\lambda(x)}+1\right]d\mu(x).
\]

Note that from \eqref{eqp:12}
\[
-\dot u_0(Tx)=\sum_{k=0}\sp \infty\frac{\lambda_k(T\sp{-k}x)}{\eta_k(T\sp{-k}x)}=\sum_{k=1}\sp \infty\frac{\lambda_k(T\sp{-k}x)}{\eta_k(T\sp{-k}x)}+
\frac{\lambda(x)}{\eta(x)}>\frac{\lambda(x)}{\eta(x)},
\]
and that implies that
\[
\frac{2\eta(x)\dot u_0(Tx)}{\lambda(x)}<-2.
\]
Then,
\[
\frac{2\eta(x)\dot u_0(Tx)}{\lambda(x)}+1<0
\]
and so,
$$\frac{d\sp2\lambda\sp+(0)}{d\theta\sp2}<0.$$
Therefore $\lambda\sp+(\theta)$ has a maximum in $\theta=0$.
\end{proof}

\begin{prop}\label{prp:02}
\[
\frac{d\sp2\lambda\sp+(0)}{d\theta\sp2}<0.
\]
\end{prop}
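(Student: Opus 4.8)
The plan is to start from the formula for $\dfrac{d^{2}\lambda^{+}(0)}{d\theta^{2}}$ in Lemma~\ref{le:cl001}, rewrite its integrand so that every quantity is evaluated at $x$ (not at $Tx$), introduce a single auxiliary function, and then display the second derivative as a sum of two integrals with visibly non‑negative integrands, the first of them strictly positive; the sign then follows at once.

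To carry this out, write $\rho(x)=\lambda(x)/\eta(x)$ and $\psi(x)=\sigma(x)/\eta(x)$. These are continuous, $\rho>0$, and $\rho<1$ by Lemma~\ref{le:qc004}, so $\tau_{0}:=\sup_{X}\rho<1$. Put $v(x)=\dot u_{0}(x)-1$; since every term of the series \eqref{eqp:12} is positive, $\dot u_{0}<0$, hence $v<-1$. From the recursions \eqref{eqp:09} and \eqref{eqp:10}, together with $\alpha(x)\sigma(x)=2v(x)^{2}\psi(x)$, I would first record
\[
\dot u_{0}(Tx)=\rho(x)v(x),\qquad \ddot u_{0}(Tx)=\rho(x)\ddot u_{0}(x)-2\rho(x)v(x)^{2}\psi(x),
\]
and substitute these into \eqref{eq:sderl}; the factors $\eta/\lambda$ and $\sigma/\lambda$ cancel and one is left with
\[
\frac{d^{2}\lambda^{+}(0)}{d\theta^{2}}=\int_{X}\Bigl[\bigl(2\dot u_{0}(x)-1\bigr)+\psi(x)\ddot u_{0}(x)-v(x)^{2}\psi(x)^{2}\Bigr]\,d\mu(x).
\]
For the first summand, $2\dot u_{0}-1=\dot u_{0}^{2}-v^{2}$ and, by $T$‑invariance of $\mu$, $\int_{X}\dot u_{0}^{2}\,d\mu=\int_{X}\dot u_{0}(Tx)^{2}\,d\mu=\int_{X}\rho^{2}v^{2}\,d\mu$, so it contributes $-\int_{X}v^{2}(1-\rho^{2})\,d\mu$.

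For the other summands I would insert the series \eqref{eq:sdu} (with $\alpha\sigma=2v^{2}\psi$) into $\int_{X}\psi\,\ddot u_{0}\,d\mu$, interchange the absolutely convergent sum with the integral, and change variables $x\mapsto T^{k}x$ in the $k$‑th term; this gives $\int_{X}\psi\,\ddot u_{0}\,d\mu=-2\int_{X}v^{2}\psi\,S\,d\mu$, where $S(x):=\sum_{k\ge1}\tfrac{\lambda_{k}(x)}{\eta_{k}(x)}\psi(T^{k}x)$ converges because $\lambda_{k}(x)/\eta_{k}(x)=\prod_{j=0}^{k-1}\rho(T^{j}x)\le\tau_{0}^{k}$ by \eqref{eqp:11}. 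Now set $P:=\psi+S$; factoring the first term out of each $\lambda_{k}/\eta_{k}$ shows that $P$ solves the cohomology‑type equation $P(x)=\psi(x)+\rho(x)P(Tx)$, so $S=\rho\,(P\circ T)$ and $\psi(\psi+2S)=P^{2}-S^{2}=P^{2}-\rho^{2}(P\circ T)^{2}$. Integrating against $v^{2}\,d\mu$ and using once more $T$‑invariance with $\rho(x)^{2}v(x)^{2}=\dot u_{0}(Tx)^{2}$ converts $\int_{X}v^{2}\rho^{2}(P\circ T)^{2}\,d\mu$ into $\int_{X}\dot u_{0}^{2}P^{2}\,d\mu$, whence $\int_{X}v^{2}\psi(\psi+2S)\,d\mu=\int_{X}(v^{2}-\dot u_{0}^{2})P^{2}\,d\mu=\int_{X}(1-2\dot u_{0})P^{2}\,d\mu$. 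Collecting everything,
\[
\frac{d^{2}\lambda^{+}(0)}{d\theta^{2}}=-\int_{X}v(x)^{2}\bigl(1-\rho(x)^{2}\bigr)\,d\mu(x)-\int_{X}\bigl(1-2\dot u_{0}(x)\bigr)P(x)^{2}\,d\mu(x).
\]
Both integrands are non‑negative — $v^{2}>0$ and $1-\rho^{2}>0$ by Lemma~\ref{le:qc004}, while $1-2\dot u_{0}>1$ because $\dot u_{0}<0$ — and the first integral is strictly positive, so the right‑hand side is $<0$, as claimed.

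The hard part is the reorganisation of the $\sigma$‑dependent contribution. On its own $\int_{X}\psi\,\ddot u_{0}\,d\mu$ has no definite sign, and the argument rests entirely on recognising that, after invoking the invariance of $\mu$ and introducing $P$ as the bounded solution of $P=\psi+\rho\,(P\circ T)$, this term merges with the $v^{2}\psi^{2}$ term into the single non‑positive integral $-\int_{X}(1-2\dot u_{0})P^{2}\,d\mu$. (When $\sigma\equiv0$ one has $P\equiv0$ and only the first integral survives, consistently with Lemma~\ref{le:maximo}.) The remaining points — differentiation under the integral sign in Lemma~\ref{le:cl001}, the interchange of the series \eqref{eq:sdu} with the integral, and convergence of the series defining $S$ and $P$ — are routine, since $X$ is compact and $\rho,\psi$ are continuous with $\sup_{X}\rho<1$.
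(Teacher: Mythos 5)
Your proof is correct, and it takes a genuinely different route from the paper's. The paper packages the second derivative as $F(\sigma)=F_{0}+F_{1}(\sigma)$, with $F_{0}=\int\bigl[2\eta\,(\dot u_{0}\circ T)/\lambda+1\bigr]d\mu$ a constant and $F_{1}$ the quadratic functional on $C(X)$ carrying all the $\sigma$-dependence, and then argues by continuity and by the homogeneity $F_{1}(t\sigma_{0})=t^{2}F_{1}(\sigma_{0})$. As written, that step is delicate: from $F(0)=F_{0}<0$ and continuity of $F$ one only gets $F(f)<0$ near $0$, not $F_{1}(f)<0$ near $0$ (since $F_{1}(0)=0$, continuity gives no sign for $F_{1}$ on a neighbourhood), so the negativity of $F_{1}$ on its own is the real crux and is left implicit. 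Your reorganisation supplies exactly that missing piece: after substituting the recursions \eqref{eqp:09}--\eqref{eqp:10}, applying $T$-invariance, and introducing $P$ as the (unique bounded) solution of the cohomological equation $P=\psi+\rho\,(P\circ T)$, the $\sigma$-dependent part collapses to $-\int_{X}\bigl(1-2\dot u_{0}\bigr)P^{2}\,d\mu\le0$, i.e. you prove directly that $F_{1}$ is a negative semi-definite quadratic form, with $F_{1}=0$ iff $P\equiv0$. Together with $F_{0}=-\int_{X}v^{2}(1-\rho^{2})\,d\mu<0$ (forced by $\rho<1$ from Lemma~\ref{le:qc004} and $v<-1$), the conclusion is immediate. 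What your approach buys is an explicit, pointwise-nonpositive integrand that makes the strict concavity transparent and, as you note, degenerates correctly when $\sigma\equiv0$; what the paper's functional-analytic framing buys is brevity, at the cost of a step whose justification really requires the identity you prove.
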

\begin{proof} Let $(C(X),\|\cdot\|_\infty)$ be the Banach space of all continuous functions $f:X\to\mathbb R$ provided with the supremum norm. Define the linear continuous operator
\[
L(f)=-\sum_{k=1}\sp \infty\left[\frac{\lambda_k}{\eta_k}\cdot\alpha\cdot f\right]\circ T\sp{-k+1}.
\]
Let
$$F_0=\int\left[\frac{2\eta(x)\dot u_0(Tx)}{\lambda(x)}+1\right]d\mu(x).$$
and $F_1:C(X)\to\mathbb R$ defined by
\[
F_1(f)=\int\left[\frac{f(x)L(f)(x)}{\lambda(x)}+\left(\frac{f(x)\dot u_0(Tx)}{\lambda(x)}\right)\sp2\right]d\mu(x).
\]
Then, we consider the  functional $F:C(X)\to\mathbb R$ defined by $F=F_0+F_1$. It is not difficult to see that $F$ is continuous and that
\[
\frac{d\lambda\sp+(0)}{d\theta\sp2}=F(\sigma).
\]
In particular, for $\sigma=0$ we have that $F_1(0)=0$ and from the proof of Lemma~\ref{le:maximo} we obtain that $F(0)<0$. Then there exists $r>0$ such that $F_1(f)<0$ for all $\|f\|<r$. Let $t_0>0$ and $\sigma_0\in C(X)$ such that $\|\sigma_0\|<r$ and $t_0\sigma_0=\sigma$. Then, we have
\[
\frac{d\lambda\sp+(0)}{d\theta\sp2}=F(\sigma)=F_0+F_1(t_0\sigma_0)=F_0+t_0\sp2F_1(\sigma_0)<0
\]
as desired.
\end{proof}
\section{Heisenberg Nilmanifold}\label{sec:appl}

Let $\mathcal{H}\equiv\R\sp 3$ be the Heisenberg group of upper
triangular $3\times 3$ matrices with ones in the diagonal and consider
$\Phi\::\:\Hei\to\Hei$ be the automorphism  defined by
$\Phi(\x,y)=(B\x,l(\x,y))$, where $\x=(x_1,x_2)$ and
\begin{equation}\label{eq:automorf}
B=\left(\begin{array}{rccl} 2 & 1 \\ 1 & 1 \end{array} \right) \mbox{
and }l({\bf x},y)= y+x_1^2+\frac12x_2^2+x_1x_2.
\end{equation}

Let $N$ be the compact nilmanifold obtained from the quotient of the
Heisenberg group $\Hei$ by the lattice $\Gamma=\{(\x, y) : \x\in
\mathbb{Z}^{2}, y\in \frac1{2}\mathbb{Z}\}$. Let $\pi\::\:\Hei\to N$ be the
projection and $f\::\:N\to N$ be the induced diffeomorphism from $\Phi$
by
$$
f\circ \pi=\pi\circ\Phi.
$$
Then $f$ is a strong partially hyperbolic diffeomorphism. We remark that $f$ is ergodic with respect to Lebesgue
measure with zero central Lyapunov exponent.

We recall that the group $\mathcal{H}$ is a Lie group with the usual
product of matrices, then the automorphism $\Phi$ is conjugated via the
exponential map (of the Lie algebra) with $D\Phi(({\bf 0},0))$. Therefore, the
stable, unstable and central leaves of $\Phi$ at the point
$(\x,y)\in\mathcal{H}$ are explicitly
$$
W\sp s((\x, y),\Phi)=\left\{\left(t\v\sp s_B,\frac{tx_1p_2+tx_2p_1+t\sp
2p_1p_2}{2}\right)\::\:t\in\R\right\},
$$
$$
W\sp u((\x, y),\Phi)=\left\{\left(t\v\sp u_B,\frac{tx_1q_2+tx_2q_1+t\sp
2q_1q_2}{2}\right)\::\:t\in\R\right\}
$$
and
$$
W\sp c((\x, y),\Phi)=\left\{\left(\x,y+t\right)\::\:t\in\R\right\},
$$
where $\v\sp s_B=(p_1,p_2)$ and $\v\sp u_B=(q_1,q_2)$ are the
eigenvector associated to the eigenvalues $\lambda\sp s_B$ and
$\lambda\sp u_B$ respectively. Thus, the invariant spaces for $\Phi$
at $(\x,y)$ are given by
$$
E\sp s_\Phi((\x, y))=\left\langle\left(\v\sp
s,\frac{x_1p_2+x_2p_1}{2}\right)\right\rangle,$$
$$
E\sp u_\Phi((\x, y))=\left\langle\left(\v\sp
u,\frac{x_1q_2+x_2q_1}{2}\right)\right\rangle
$$
and
$$
E\sp c_\Phi((\x, y))=\langle({\bf 0},1)\rangle.
$$

The sets described above are in fact leaves of invariant foliations and
they are projected onto the invariant foliations in $N$. Observe that
$W^{c}((\x,y),\Phi)$ is projected onto a circle in $N$ and hence the
projection of the central foliation is a foliation by circles.
Moreover, these circles are collapsed by the projection $p_{N}$ and
hence the central foliation is a nontrivial fibration with base
$\mathbb{T}^{2}$ and fiber $\mathbb{S}^{1}$.

Recall that $E^{u}_\Phi((\x,y))$ is the space generated by the vector
$(\v\sp u,\frac {x_{1}q_{2}+x_{2}q_{1}}2)$ and  $E^{c}_\Phi((\x,y))$ is
the space generated by the vector $({\bf 0},1)$. Then,
$E\sp{cu}_\Phi((\x,y))$ is generated by the vectors $(\v\sp u,0)$ and
$({\bf 0},1)$, so $E\sp{cu}_\Phi$ does not depend of the point $(\x,y)$.
The derivative of $\Phi$ is given by
\[
D\Phi(\x,y)=\left(
  \begin{array}{ccc}
    2 & 1 & 0 \\
    1 & 1 & 0 \\
    2x_1+x_2 & x_1+x_2 & 1 \\
  \end{array}
\right).
\]
Evaluating $D\Phi(\x,y)$ in the vectors $(\v\sp u,0)$ and $({\bf 0},1)$ we obtain
\[
D\Phi(\x,y)(\v\sp u,0)=(\lambda_A\sp u\v\sp u, A\x\cdot \v\sp
u)=(\lambda_A\sp u\v\sp u, \x\cdot A\v\sp u)=\lambda_A\sp u(\v\sp
u,0)+\lambda_A\sp u\x\cdot \v\sp u({\bf 0},1)
\]
and
\[
D\Phi(\x,y)({\bf 0},1)=({\bf 0},1).
\]
Then, we obtain $D\Phi(\x,y)|{E\sp{cu}_\Phi}$ in terms of the base
$\{(\v\sp u,0), ({\bf 0},1)\}$,
\begin{equation}\label{eq:derivadaphi}
D\Phi(\x,y)|{E\sp{cu}_\Phi}=\left(\begin{array}{cc}
                         \lambda_A\sp u & 0 \\
                         \lambda_A\sp ux\cdot \v\sp u & 1
                       \end{array}\right).
\end{equation}
Clearly $E\sp{cu}_f(\pi(\x,y))=D\pi(\x,y)E\sp{cu}_\Phi$. We
assert that the sub--bundle $E\sp{cu}_f\subset TN$ is trivial. In fact,
observe that a fundamental domain for $N=\mathcal{H}/\Gamma$ is the cube
\[
\mathcal{C}=\{(\x,y)\in\mathcal{H}:0\le x_1, x_2\le 1,\, 0\le y\le
1/2\}.
\]
Since the product in $\mathcal{H}$ can be written by $(\x,y)\cdot
(\a,b)=(\x+\a, y+ b+x_1a_2)$, then in the box $\mathcal{C}$, we identify
the top face with the bottom face because
\[
(\x, 0)\cdot({\bf 0}, 1/2)= (\x, 1/2)\, \Rightarrow\, (\x, 0)\sim (\x,
1/2),
\]
and the front face with the back face because
\[
((0,x_2),y)\cdot({\bf e}_1, 0)= ((1,x_2), y)\, \Rightarrow\,
((0,x_2),y)\sim ((1,x_2), y).
\]
The group multiplication
\[
(\x, y)\cdot ({\bf e}_2, 0)= ((x_1,x_2+1), y+x_1)
\]
tells us to join left and right faces of the cube by the identification
\begin{equation}\label{eq:03}
((x_1, 0), y)\sim\left((x_1, 1), y+x_1 \mod \frac{1}{2}\right).
\end{equation}
Thus $N=\mathcal{C}/\sim$, that is, the nilmanifold is the cube
$\mathcal{C}$ with the previous relations. To see a representation of
$E_f\sp{cu}$ it is necessary to give relations in the tangents of
$E_\Phi\sp{cu}$ restricted to the border of the cube.

\noindent For top (resp. bottom) faces and front (resp. back) faces, the relation in the vectors
is the identity. On the other hand, let $h$ be the right multiplication
of $(\x,y)$ by $({\bf e}_2, 0)$. Since
\[
Dh(\x,y)=\left(
           \begin{array}{ccc}
             1 & 0 & 0 \\
             0 & 1 & 0 \\
             1 & 0 & 1 \\
           \end{array}
         \right)
\]
then $Dh((x_1, 0), y)({\bf 0},1)=({\bf 0},1)$ and $Dh((x_1, 0), y)({\bf
v}\sp u_B,0)=({\bf v}\sp u_B,q_1)$. It follows from the
relation~\ref{eq:03} and the last equalities the following vector
relations:
\[
({\bf 0},1)\in T_{((x_1, 0), y)}\mathcal{H}\, \sim ({\bf 0},1)\in
T_{((x_1, 1), y+x_1 \mod 1/2)}\mathcal{H}
\]
and
\[
({\bf v}\sp u_B,0)\in T_{((x_1, 0), y)}\mathcal{H}\, \sim ({\bf v}\sp
u_B,q_1)\in T_{((x_1, 1), y+x_1 \mod 1/2)}\mathcal{H}.
\]
\noindent Thus $ E\sp{cu}_f=\left(\bigsqcup_{w\in
\mathcal{C}}E_{\Phi,w}\sp{cu}\right)/\sim$. To see that $E_f\sp{cu}$ is
trivial, observe that the sections $v_1$ and $v_2$ over $E_f\sp{cu}$
defined by
\[
v_1((x_1,x_2),y)=({\bf 0},1)
\]
and
\[
v_2((x_1,x_2),y)=({\bf v}_B\sp{u},x_2q_1)
\]
are global non--zero sections that define a base in each fiber of
$E_f\sp{cu}$.

Finally, we define the cocycle $F:TN\to TN$ as the cocycle induced by $f$ and its derivative, that is,
$$F((\x,y),v)=(f(\x,y),Df(\x,y)v).$$

\noindent and we define the one--parameter family of continuous cocycles $F_\theta\::\: TN\to TN$ defined by
$$
F_\theta((\x,y),v)=\begin{cases} Df(\x,y)R_\theta v&, \quad v\in E^{cu};\\
Df(\x,y)v &, \quad v\in
E^s.
\end{cases}
$$

\noindent From item (i) of Theorem~\ref{teo:main}, there exists an open set $\mathcal{I}\subset\mathbb{R}$ such that  for every $\theta\in\mathcal{I}$, the cocycle $(F_\theta|E\sp{cu})$ has dominated splitting. Moreover $(F_\theta|E\sp{cu})$ is partially hyperbolic, and so is the cocycle $F_\theta$ whose splitting is given by $TN=E\sp s\oplus E^c_\theta \oplus E^u_\theta$. From item (ii) of Theorem~\ref{teo:main}, reducing the open set $\mathcal{I}$ if necessary,  the central Lyapunov exponent of $F_\theta$ is positive and then we prove the Corollary~\ref{cor:main}.


\bibliographystyle{plain}

\bibliography{VV}

\end{document}